\newtheorem{theorem}{Theorem}[section]
\newtheorem{lemma}[theorem]{Lemma}
\newtheorem*{Acknowledgement}{\textnormal{\textbf{Acknowledgement}}}
\theoremstyle{definition}
\newtheorem{definition}[theorem]{Definition}
\newtheorem{example}[theorem]{Example}
\newtheorem{corollary}[theorem]{Corollary}
\newtheorem{proposition}[theorem]{Proposition}
\newtheorem{remark}[theorem]{Remark}
\numberwithin{equation}{section}
\newcommand{\beqa}{\begin{eqnarray*}}
\newcommand{\eeqa}{\end{eqnarray*}}
\newcommand{\beqn}{\begin{eqnarray}}
\newcommand{\eeqn}{\end{eqnarray}}
\renewcommand{\a}{\alpha}
\newcommand{\e}{\varepsilon}
\newcommand{\la}{\lambda}
\newcounter{cnt1}
\newcounter{cnt2}
\newcounter{cnt3}
\newcommand{\blr}{\begin{list}{$($\roman{cnt1}$)$}
        {\usecounter{cnt1} \setlength{\topsep}{0pt}
                \setlength{\itemsep}{0pt}}}
\newcommand{\bla}{\begin{list}{$($\alph{cnt2}$)$}
        {\usecounter{cnt2} \setlength{\topsep}{0pt}
                \setlength{\itemsep}{0pt}}}
\newcommand{\bln}{\begin{list}{$($\arabic{cnt3}$)$}
        {\usecounter{cnt3} \setlength{\topsep}{0pt}
                \setlength{\itemsep}{0pt}}}
\newcommand{\el}{\end{list}}
\newtheorem{thm}{Theorem}
\newtheorem{Def}[thm]{Definition}
\newtheorem{rem}[thm]{Remark}
\newcommand{\Rem}{\begin{rem} \rm}
\newcommand{\bdfn}{\begin{Def} \rm}
\newcommand{\edfn}{\end{Def}}
\title[ ]{ball separation characterization of ball dentability and  related properties}
\author[ S. Basu, \ S. Seal ]
			{Sudeshna Basu$^{1}$, Susmita Seal$ ^{2}$ }
		\address{{$^{1}$}   Sudeshna Basu,
		Department of Mathematics and Statistics, 
				Loyola University, 
				Baltimore, MD 21210, USA 
				}
		\email{sudeshnamelody@gmail.com}
		\address {{$^{2}$} Susmita Seal, 
				School of Mathematical Sciences, National Institute of Science Educational and Research Bhubaneswar, An OCC of Homi Bhabha National Institute, P.O. - Jatni, District - Khurda, Odisha - 752050, India}
		\email{susmitaseal1996@gmail.com, susmitasealmath@niser.ac.in}
			\subjclass{46B20}
			\keywords{Slices, Dentability, Huskability, Small Combination of Slices, Ball separation property.}
			\date{}
\begin{document}
\maketitle
\begin{abstract}
	In Euclidean spaces, every closed, bounded, convex set can be characterized by two equivalent notions of separation properties. This is not true in general for arbitrary Banach spaces. In this work, we present  a  ball separation characterization for spaces where the unit ball is dentable. We also explore related properties.


\end{abstract}

\section{Introduction}
The classical notion of separation in Functional Analysis ensures that a point outside a closed bounded convex set can always be separated by a hyperplane.
The ball separation property, considers whether a point outside a closed bounded convex set can be separated using closed balls rather than half-spaces.
The two  notions are equivalent  in finite-dimensional normed linear spaces  but  its equivalence in  general Banach spaces is highly nontrivial and uncovers rich duality relationships.
The study of ball separation in Banach spaces originated with Mazur \cite{M}. A Banach space $X$ has the \emph{Mazur Intersection Property} (MIP) if any closed, bounded, convex set in $X$ is the intersection of closed balls. Equivalently $X$ has the MIP if and only if all points on the dual unit sphere are semi denting points (see \cite{CL3}).  The notion of ball separation was further developed by Phelps \cite{P}, who established a duality between ball separation in a Banach space $X$ and the dentability of the closed unit ball in its dual space $X^*$. In a foundational result, Giles, Gregory, and Sims \cite{GGS} showed that the set of $w^*$-denting points of the dual unit ball  is norm dense in the dual unit sphere if and only if $X$ has the MIP.
Later, Chen and Lin \cite{CL1} obtained ball separation characterization of some of the well known geometric properties of Banach spaces. They demonstrated that the existence of even a single $w^*$-denting or $w^*$-Point of Continuity ($w^*$-PC) point in $B_{X^*}$ 
ensures a version of the ball separation property weaker than MIP.
They also extended the idea of MIP and introduced a refined geometric property known as \emph{Property (II)} : every closed bounded convex set is an intersection of closed convex hulls of finitely many closed balls. Moreover, they proved that $X$ satisfies Property (II) if and only if the set of $w^*$-PC of $B_{X^*}$ is norm dense in $S_{X^*}$.



Despite substantial progress in understanding ball separation and its connections to dentability and related geometric properties, a comprehensive treatment of ball separation characterizations of  a large class of Banach spaces is still unknown. In particular the ball separation charactersisation of the class of Banach spaces with dentable  closed unit ball remains to be explored.  Likewise, pointwise variants of these properties have yet to be systematically examined within the ball separation framework.


In this work, we give a  ball separation characterization of spaces where the unit ball is dentable. We extend these  ball separation characterization to a broader class of geometric properties in Banach spaces. Our main contributions are as follows.
In Section 3, we provide ball separation characterizations for Banach spaces with   small diameter properties  namely when the unit ball or the dual unit ball is dentable.
       In Section 4, motivated by the concept of semi denting point \cite{CL2}, we introduce and study pointwise versions of  small diameter properties : semi Point of Continuity (semi PC) and semi Small Combination of Slice (semi SCS) points. We give ball separation characterizations of all these properties and their $w^*$-versions.  We show that a Banach space $X$ has Property (II)  \cite{CL} if and only if every point of $S_{X^*}$ is a semi $w^*$-PC point of $B_{X^*}$.
         In Section 5, following \cite{CL},  we define the notion of an $\mathcal{A}$-Small Combination of Slice point ($\mathcal{A}$-SCS point) in $B_{X^*}$, which generalizes the concept of $w^*$-SCS points. We establish a necessary ball separation condition for the existence of such points. Furthermore, we prove that if the linear span of $\mathcal{A}$-SCS points is dense in $X^*$ (with respect to a topology $\tau_\mathcal{A}$), then every closed, bounded, convex subset in a compatible collection $\mathcal{A} \subset X$ is ball generated, extending a result from \cite{BB}.
  In some proofs, we adapt techniques from the previous works
of Chen and Lin \cite{CL1} and Giles \cite{G}, leading to several new results and unifying existing ones within a common geometric framework.

\section{Notations and Preliminaries}

Throughout this work, we consider only real Banach spaces. 
We  denote the closed unit ball of a Banach space $X$ by $B_X$, the unit sphere by $S_X$ and the closed ball of radius $r >0$ and center $x$ by $B(x, r).$ 
 For any subset $C$ of $X$ we denote its closure by $\overline{C}$, its convex hull by co($C$), its absolute convex hull by $|\mathrm{co}|$($C$) and its diameter by diam$(C)$. Given two subsets 
$A$ and $B$ of $X$, the usual distance between them is denoted by $d(A,B)$.
 For any bounded $K\subset X$,  we define
$\|x^*\|_K = \sup\{|x^* (x)| : x \in K\}$, $x^*\in X^*$, which induces 
 a seminorm on $X^*$. 
The diameter of $C\subset X^*$ under this seminorm is given by $\mathrm{diam}_K C=\sup\{\|x^*- y^*\|_K : x^*, y^*\in C\}.$

A \emph{slice} of a bounded set $C\subset X$ is defined by 
$$S(C, x^*, \a) = \{x \in C : x^*(x) > \sup\limits_{c\in C} x^*(c) - \a \}$$
where $x^*\in X^*$ and $\a > 0.$
 We assume without loss of generality that $\|x^*\| = 1$. 
 Analogously one can define $w^*$-slices in $X^*$ by considering the determining functional from the predual space instead of dual space.
The term convex combination of slices  of $C$ refers to expressions of the form $\sum\limits_{i=1}^{n} \lambda_i S_i$
where $\lambda_1,\ldots,\lambda_n>0$ with $\sum\limits_{i=1}^{n} \lambda_i =1,$ and $S_1, \ldots, S_n$ are slices of $C$.
Similarly,
one can define $w^*$-convex combination of slices in $X^*$. 
\begin{definition}
A  point $ x \in S_{X}$ is called a \emph{denting point} (resp. \emph{Point of Continuity} (PC), \emph{Small Combination of Slice point} (SCS point)) of $B_X$ if for any $\varepsilon> 0,$  there exists a slice (resp. weakly open set, convex combination of  slices)  $S$ of $B_{X}$ such that $x \in S \subset B(x,\varepsilon) $.
\end{definition}
Analogously the notions of $w^*$-denting , $w^{*}$-PC and $w^{*}$-SCS point  in dual space is defined by considering $w^*$-slices, $w^*$-open sets and convex combination of $w^*$-slices respectively.

	
		

		

\begin{definition}
	\cite{BS} A Banach space $X$ has 
	
	\begin{enumerate}
		
		\item  \emph{Ball Dentable Property} ($BDP$) if $B_X$ has  
		slices of arbitrarily small diameter. 
		
		\item  \emph{Ball Huskable Property} ($BHP$) if $B_X$ 
		has nonempty relatively weakly open subsets of arbitrarily small diameter.
		
		\item  \emph{Ball Small Combination of Slice Property} ($BSCSP$) 
		if  $B_X$ has convex combination of slices of arbitrarily 
		small diameter.
	\end{enumerate}
	
\end{definition}
Analogously we can define $w^*$-$BDP$ , $w^{*}$-$BHP$ and $w^{*}$-$BSCSP$ point  in dual space by considering $w^*$-slices, $w^*$-open sets and convex combination of $w^*$-slices respectively. These properties, collectively referred to as  small diameter properties, have significant geometric implications. For further details, see
\cite{B},\cite{BR},\cite{BS},\cite {BS1}  and \cite{BGSY}.

\section{Ball separation characterization of Small Diameter Properties}

We begin by recalling the following results essential for our subsequent discussions.

\begin{proposition} \label{A1}
	\cite{BS} A Banach space $X$ has $BDP$ (respectively, $BHP$, $BSCSP$) if and only if $X^{**}$ has $w^*$-$BDP$ (respectively, $w^*$-$BHP$, $w^*$-$BSCSP$). 
\end{proposition}

\begin{lemma}\label{phelp's lemma}
\cite{CL1} 
For a normed space $X,$ let $f$ and $g$ be elements in $S_{X^*},$ and let $A=\{x\in B_X:f(x)>\frac{\varepsilon}{2}\}$ where $0<\varepsilon<1.$ If $\inf g(A)>0,$ then $\|f-g\|<\varepsilon.$
\end{lemma}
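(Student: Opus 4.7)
The plan is to convert the hypothesis into the value of a constrained optimisation problem and then apply Lagrangian duality. Set $c = \tfrac{1}{2}\inf g(A) > 0$. The hypothesis immediately gives the key observation: for any $x \in B_X$ with $g(x) \leq c$ we have $g(x) < \inf g(A)$, so $x \notin A$, and therefore $f(x) \leq \varepsilon/2$. Consequently
\[
M \;:=\; \sup\bigl\{f(x) : \|x\| \leq 1,\; g(x) \leq c\bigr\} \;\leq\; \tfrac{\varepsilon}{2}.
\]

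Next I would invoke strong Lagrangian duality. Slater's condition is trivially satisfied (at $x = 0$ we have $\|0\| = 0 < 1$ and $g(0) = 0 < c$), so
\[
M \;=\; \inf_{\mu \geq 0}\bigl[\mu c + \|f - \mu g\|\bigr].
\]
The objective $\mu \mapsto \mu c + \|f - \mu g\|$ is continuous and coercive (it is bounded below by $\mu(c+1) - 1$), so the infimum is attained at some $\mu^{*} \geq 0$. A sanity check rules out $\mu^{*} = 0$: at $\mu = 0$ the objective equals $\|f\| = 1$, which exceeds $M \leq \varepsilon/2 < 1$. Hence $\mu^{*} > 0$.

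To finish, combine the triangle and reverse triangle inequalities. Since $\|g\| = 1$, the latter gives $|\mu^{*} - 1| = \bigl|\|f\| - \|\mu^{*}g\|\bigr| \leq \|f - \mu^{*} g\|$, so
\[
\|f - g\| \;\leq\; \|f - \mu^{*} g\| + |\mu^{*} - 1| \;\leq\; 2\|f - \mu^{*} g\| \;\leq\; 2\bigl(\tfrac{\varepsilon}{2} - \mu^{*} c\bigr) \;=\; \varepsilon - 2\mu^{*} c \;<\; \varepsilon,
\]
which is the desired bound. The main obstacle is the appeal to strong duality, but this is routine: the program has a linear objective, convex feasible set defined by a norm constraint and a halfspace constraint, and Slater's condition holds trivially, so strong duality and attainment of the minimiser are standard.
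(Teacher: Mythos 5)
Your argument is correct, but it takes a genuinely different route from the one behind the cited statement (the paper itself quotes this lemma from Chen--Lin without proof; the standard argument there is Phelps' original one). The classical proof notes that the hypothesis forces $|f(x)|\le\varepsilon/2$ for every $x\in\ker g\cap B_X$ (apply the defining property of $A$ to both $x$ and $-x$), so $f$ restricted to $\ker g$ has norm at most $\varepsilon/2$; a Hahn--Banach extension then writes $f=\lambda g+\tilde f$ with $\|\tilde f\|\le\varepsilon/2$, the reverse triangle inequality pins $|\lambda|$ near $1$, and the positivity of $g$ on $A$ is used only to exclude $\lambda<0$. Your proof replaces the ``restrict to the kernel and extend'' step by Lagrangian duality: the multiplier $\mu^{*}$ plays exactly the role of $\lambda$, and the concluding triangle-inequality computation is the same in spirit. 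Two comments. First, the step you flag as routine --- strong duality \emph{with dual attainment} --- is the entire content of the proof and deserves more than an appeal to Slater's condition: weak duality only gives $M\le\mu c+\|f-\mu g\|$, which is the useless direction, so you must invoke (or reprove) the vector-space Lagrange multiplier theorem for a single real-valued convex constraint. It does hold here with no completeness or dimension hypotheses, because the separation takes place in $\mathbb{R}^{2}$ applied to the convex set $\{(g(x)-c,\,r): x\in B_X,\ r\le f(x)\}$ (see e.g.\ Luenberger, \emph{Optimization by Vector Space Methods}, \S 8.6); a citation or that two-line argument should be added. Second, your approach has a genuine payoff: it delivers the strict inequality $\|f-g\|\le\varepsilon-2\mu^{*}c<\varepsilon$ directly from $\mu^{*}>0$ and $c=\tfrac12\inf g(A)>0$, whereas the classical argument most naturally yields $\|f-g\|\le\varepsilon$ and needs an extra observation to recover the strict bound stated in the lemma.
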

\begin{lemma}\label{cl lemma}
\cite[Lemma 1.1]{CL1} 
Let 
$x\in S_X$, $\delta>0$ and $\varepsilon>0.$ If 
diam $( S(B_{X^*},x,\delta))\leqslant \varepsilon,$
 then 
$$\sup_{y\in B_X} \frac{\|x+\frac{\delta}{2} y\|+\|x-\frac{\delta}{2} y\|-2}{\frac{\delta}{2}}\leqslant \varepsilon$$
\end{lemma}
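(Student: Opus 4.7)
The plan is to fix $y\in B_X$ and produce two functionals inside (the closure of) the slice $S(B_{X^*},x,\delta)$ whose difference controls the quantity $\|x+\frac{\delta}{2}y\|+\|x-\frac{\delta}{2}y\|-2$. By Hahn--Banach, pick $f,g\in S_{X^*}$ that attain their norms at $x+\frac{\delta}{2}y$ and $x-\frac{\delta}{2}y$ respectively; then
$$f(x)+\tfrac{\delta}{2}f(y)=\left\|x+\tfrac{\delta}{2}y\right\| \qquad \text{and} \qquad g(x)-\tfrac{\delta}{2}g(y)=\left\|x-\tfrac{\delta}{2}y\right\|.$$
Adding these equations and using $f(x),g(x)\le 1$ yields the key inequality
$$\left\|x+\tfrac{\delta}{2}y\right\|+\left\|x-\tfrac{\delta}{2}y\right\|-2 \;\le\; \tfrac{\delta}{2}\bigl(f(y)-g(y)\bigr).$$

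Next I would place $f$ and $g$ inside the (closed) slice so that the diameter hypothesis applies. The triangle inequality gives $\|x\pm\tfrac{\delta}{2}y\|\ge 1-\tfrac{\delta}{2}$, and combining this with $f(y)\le 1$ and $-g(y)\le 1$ forces $f(x),g(x)\ge 1-\delta$. Thus $f$ and $g$ belong to $\{h\in B_{X^*}:h(x)\ge 1-\delta\}$, the norm-closure of the open slice $S(B_{X^*},x,\delta)$: for any $h_0\in S_{X^*}$ with $h_0(x)=1$ (which exists because $x\in S_X$), the convex combination $tf+(1-t)h_0$ lies in the open slice for $t\in[0,1)$ and converges to $f$ in norm as $t\to 1^-$, and symmetrically for $g$. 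Since the diameter is preserved under norm-closure, $\|f-g\|\le\varepsilon$.

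Combining the two estimates, $f(y)-g(y)\le \|f-g\|\,\|y\|\le\varepsilon$, so the displayed inequality from the first step becomes $\|x+\frac{\delta}{2}y\|+\|x-\frac{\delta}{2}y\|-2\le \frac{\delta\varepsilon}{2}$. Dividing by $\delta/2$ and then taking the supremum over $y\in B_X$ completes the argument. I would flag the closure argument as the main (though still minor) obstacle: the Hahn--Banach functionals land exactly on the boundary $f(x)=1-\delta$ rather than strictly inside the slice, so one must verify by hand that the diameter bound transfers to the closed slice. All remaining steps are routine linear computations.
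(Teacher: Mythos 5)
Your proof is correct. Note that the paper does not prove this lemma at all --- it is quoted verbatim from \cite[Lemma 1.1]{CL} --- so there is no internal argument to compare against; what you give is the standard duality computation behind that result: norming functionals $f,g$ for $x\pm\frac{\delta}{2}y$ are forced by the triangle inequality into the set $\{h\in B_{X^*}:h(x)\geqslant 1-\delta\}$, and the diameter hypothesis then bounds $f(y)-g(y)$ by $\varepsilon$. Your handling of the boundary case $f(x)=1-\delta$ (convex combinations with a functional norming $x$, plus the fact that diameter is unchanged under norm closure) is sound and is exactly the right fix for the only delicate point.
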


\begin{theorem} \label{ch w star bdp}
For a Banach space $X,$ the following are equivalent :
\begin{enumerate}
\item $X^*$ has $w^*$-$BDP.$
\item Given $\varepsilon>0,$ there exists $x^*_0\in S_{X^*}$ such that
for every subset $C$ of $B_X$ with $\inf x^*_0(C)> \varepsilon,$ there exists a closed ball $B$ in $X$ satisfying $C\subset B$ and $\inf x^*_0(B)>0$.
\item Given $\varepsilon>0,$ there exists $x^*_0\in S_{X^*}$ such that 
for every subset $C$ of $B_{X^{**}}$ with $\inf x^*_0(C)> \varepsilon,$ there exists a closed ball $B$ in $X^{**}$ with center in $X$ satisfying $C\subset B$ and $\inf x^*_0(B)>0$.
\end{enumerate}
\end{theorem}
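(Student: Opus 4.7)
My plan is to prove the cycle $(1) \Rightarrow (3) \Rightarrow (2) \Rightarrow (1)$.

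For $(2) \Rightarrow (1)$, I deduce small $w^*$-slices from the ball separation. Given $0 < \eta < 1$, I apply $(2)$ with $\varepsilon = \eta/4$ to obtain $f_0 \in S_{X^*}$. The set $A = \{x \in B_X : f_0(x) > \eta/2\}$ is bounded with $\inf f_0(A) > \eta/4$, so $(2)$ yields a ball $B(z, r) \subset X$ with $A \subset B$ and $\|z\| > r$. I then consider the $w^*$-slice $T = S(B_{X^*}, z/\|z\|, \alpha)$ at $z/\|z\| \in S_X$ for a small $\alpha > 0$ with $\alpha < 1 - r/\|z\|$. For each $g \in T$ (after normalizing in $S_{X^*}$), the inequality $g(a) \geq g(z) - \|z - a\| > (1-\alpha)\|z\| - r > 0$ for $a \in A$ gives $\inf g(A) > 0$, and then Lemma \ref{phelp's lemma} applied with $f = f_0$ and Phelps parameter $\eta$ (so that the set $A$ there coincides with my $A$) forces $\|f_0 - g\| < \eta + \alpha$, whence $\operatorname{diam}(T) \leq 2(\eta + \alpha)$. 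As $\eta$ and $\alpha$ can both be made arbitrarily small, $w^*$-BDP follows.

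For $(3) \Rightarrow (2)$, any $C \subset X$ embeds isometrically into $X^{**}$ with the same value of $\inf f_0$, and a ball $B_{X^{**}}(z, r)$ from $(3)$ with center $z \in X$ restricts to $B_{X^{**}}(z, r) \cap X = B_X(z, r)$, a ball in $X$ preserving both the containment of $C$ and the exclusion of $0$.

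For $(1) \Rightarrow (3)$, the main step, my plan is to combine Lemma \ref{cl lemma} with Goldstine's theorem. From $w^*$-BDP I extract a direction $x \in S_X$ at which $w^*$-slices $S(B_{X^*}, x, \delta)$ of arbitrarily small diameter can be found by shrinking $\delta$, together with $f_0 \in S_{X^*}$ satisfying $f_0(x) = 1$ (Hahn--Banach). A convexity rescaling of Lemma \ref{cl lemma} gives $\|x + u\| + \|x - u\| \leq 2 + \eta \|u\|$ for $u \in X$ with $\|u\| \leq \delta/2$; by Goldstine's density of $B_X$ in $B_{X^{**}}$ and the weak-$*$ lower semicontinuity of the norm on $X^{**}$, this persists for $u \in X^{**}$ with $\|u\|_{X^{**}} \leq \delta/2$. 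Given bounded $C \subset X^{**}$ with $\inf f_0(C) > \varepsilon$ and $M = \sup_{c \in C}\|c\|$, I choose $\eta < \varepsilon/M$, $\delta$ small enough that the slice at $x$ has diameter $\leq \eta$, and $t \geq 2M/\delta$. The extended inequality with $u = c/t$ yields $\|tx - c\| \leq t + \eta\|c\| - f_0(c) \leq t + \eta M - \varepsilon$, so the ball $B = B_{X^{**}}(tx, r)$ with $r \in [\,t + \eta M - \varepsilon,\ t\,)$ satisfies $C \subset B$, while $0 \notin B$ since $\|tx\|_{X^{**}} = t > r$.

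The main obstacle is $(1) \Rightarrow (3)$: the interval of valid $r$ is nonempty only when $\eta M < \varepsilon$, forcing the slice diameter $\eta$ to depend on the (a priori arbitrary) set-diameter $M$, while the theorem asks for a single $f_0$ (and hence a single $x$) chosen before $C$ is specified. The resolution will be to show that $w^*$-BDP yields a single direction $x \in S_X$ admitting $w^*$-slices at $x$ of arbitrarily small diameter --- equivalently, an associated $w^*$-strongly exposed functional $f_0$ of $B_{X^*}$ --- so that the slice may be shrunk uniformly as $C$ varies while $f_0$ and $x$ remain fixed; this will be the technical heart of the argument, following the style of \cite{CL} and \cite{G}.
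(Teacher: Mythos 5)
Your implications $(2)\Rightarrow(1)$ and $(3)\Rightarrow(2)$ are correct and essentially the paper's own arguments: the paper proves $(ii)\Rightarrow(i)$ with the same test set $K=\{x\in B_X: f_0(x)>\varepsilon\}$, the same slice $S(B_{X^*},x_0/\|x_0\|,1-r_0/\|x_0\|)$, and the same appeal to Lemma \ref{phelp's lemma}. The genuine gap is in $(1)\Rightarrow(3)$, and it lies exactly where you defer it: the proposed resolution is false. $w^*$-$BDP$ provides, for each $\eta>0$, \emph{some} $w^*$-slice of diameter less than $\eta$, but the defining vectors of these slices may change completely as $\eta\to 0$. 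A single $x\in S_X$ with $\mathrm{diam}\,S(B_{X^*},x,\delta)\to 0$ as $\delta\to 0$ would force the closures of these nested slices to intersect in a $w^*$-strongly exposed (in particular $w^*$-denting) point of $B_{X^*}$, which is a strictly stronger property than $w^*$-$BDP$ (the reverse implications in the diagram of Section 1 fail in general, per \cite{BS}). So the ``technical heart'' you postpone cannot be carried out, and your argument only closes when $\eta M<\varepsilon$ with $M=\sup_{c\in C}\|c\|$ known in advance.

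For comparison: the paper's $(i)\Rightarrow(ii)$ uses one fixed slice of diameter $<\varepsilon/3$ and the same norm inequality from Lemma \ref{cl lemma} that you use (its ball $B(x_0/\alpha,1/\alpha-\varepsilon/3)$ is your $B(tx,r)$ with $t=1/\alpha$), but it first declares ``without loss of generality $C\subset B_X$,'' so that $M=1$ and no shrinking family of slices is needed; the bidual version $(iii)$ is then deduced from $(ii)$ not by redoing the geometry in $X^{**}$ via Goldstine but by noting $C\subset\overline{A}^{w^*}$ for $A=\{x\in rB_X: f_0(x)>\varepsilon\}$ and passing to the $w^*$-closed ball $x_0+r_0B_{X^{**}}$. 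Be aware, however, that this normalization is precisely your obstacle in disguise: replacing $C$ by $C/R$ preserves the existence of a separating ball (by the corollary to Proposition \ref{CL result 1}, since $B_1(C)=B_1(C/R)$) but weakens the hypothesis to $\inf f_0(C/R)>\varepsilon/R$, so the reduction to $C\subset B_X$ is not automatic with $\varepsilon$ fixed. If statement $(2)$ is read with $C\subset B_X$, your computation closes cleanly with $\eta=\varepsilon/2$ and a single slice; for unbounded $M$ neither your route nor the one-slice argument suffices as written.
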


\begin{proof}
 (i) $\Rightarrow$ (ii). Let $\varepsilon>0.$ Since $X^*$ has $w^*$-$BDP,$ then there exists a $w^*$-slice $S(B_{X^*},\tilde{x},2\alpha)$ of $B_{X^*}$ with diameter less than $\frac{\varepsilon}{2}.$ Choose $x^*_0\in S_{X^*}$ such that $x_0^*(\tilde{x})=1$.
 We aim to show that  $x^*_0 $  satisfies the condition in (ii).  

Consider a  subset $C\subset B_X$ with 
$\inf x^*_0(C)> \varepsilon.$ 
 By Lemma $\ref{cl lemma},$
\begin{equation}\label{1 chap 6}
\sup_{y\in B_X} \frac{\|\tilde{x}+\alpha y\|+\|\tilde{x}-\alpha y\|-2}{\alpha}\leqslant \frac{\varepsilon}{2}.
\end{equation}
 We claim that  $C\subset B(\frac{\tilde{x}}{\alpha},\frac{1}{\alpha}-\frac{\varepsilon}{2}).$ \\
 If not, there exists $y_0\in C$ such that $\|\frac{\tilde{x}}{\alpha} - y_0\| > \frac{1}{\alpha}-\frac{\varepsilon}{2}.$
 Then
 $$1+\alpha x_0^*(y_0)=x_0^* (\tilde{x}+\alpha y_0)\leqslant \|\tilde{x}+\alpha y_0\|$$
 Therefore, $x_0^*(y_0)\leqslant \frac{\|\tilde{x}+\alpha y_0\|-1}{\alpha}$.
 
  Note that, 
  \begin{equation} \notag
  \begin{split}
  \frac{\|\tilde{x}+\alpha y_0\|+\|\tilde{x}-\alpha y_0\|-2}{\alpha}
 =\frac{\|\tilde{x}+\alpha y_0\|-1}{\alpha} + \|\frac{\tilde{x}}{\alpha} - y_0\| - \frac{1}{\alpha} \hspace{1 cm}\\
 > x_0^* (y_0) -\frac{\varepsilon}{2}\hspace{4 cm}\\
 \geqslant \inf x_0^*(C) -\frac{\varepsilon}{2}\hspace{3.5 cm}\\
 >\varepsilon -  \frac{\varepsilon}{2}
 =\frac{\varepsilon}{2}\hspace{3.9 cm}
 \end{split}
  \end{equation}
 This contradicts $(\ref{1 chap 6}).$ Hence our claim holds.
 
  Also, $\inf x_0^*(B(\frac{\tilde{x}}{\alpha},\frac{1}{\alpha}-\frac{\varepsilon}{2}))=x_0^*(\frac{\tilde{x}}{\alpha})- (\frac{1}{\alpha}-\frac{\varepsilon}{2})=\frac{1}{\alpha}- (\frac{1}{\alpha}-\frac{\varepsilon}{2})=\frac{\varepsilon}{2} >0.$
  

(ii) $\Rightarrow$ (i). 
Let $\varepsilon>0.$ Then there exists $x^*_0\in S_{X^*}$ satisfying the condition in (ii). Choose $0<\delta<\frac{\varepsilon}{2}$. Define 
$$K_{\delta}=\{x\in B_X: x^*_0(x)>\varepsilon+ \delta\}.$$
 Then $\inf x^*_0(K_{\delta})>\varepsilon.$ Therefore by (ii), we can choose a closed ball $B=B(\tilde{x},\tilde{r})$ in $X$ such that $K_{\delta}\subset B$ and $\inf x^*_0(B)>0$.
 Choose $0<\beta <\min \{\varepsilon - 2\delta , 1-\frac{\tilde{r}}{\|\tilde{x}\|}\}$.
Consider the $w^*$-slice $S=S(B_{X^*},\frac{\tilde{x}}{\|\tilde{x}\|}, \beta)$ in $B_{X^*}.$
Let $x^*\in S$. Then $\frac{x^*}{\|x^*\|} \in S\bigcap S_{X^*}.$ Thus
\begin{equation}
\frac{x^*}{\|x^*\|}(\tilde{x})=\|\tilde{x}\| \ \frac{x^*}{\|x^*\|} \Big(\frac{\tilde{x}}{\|\tilde{x}\|}\Big)> \|\tilde{x}\| \ (1-\beta) \geqslant \|\tilde{x}\| \ [1-(1-\frac{\tilde{r}}{\|\tilde{x}\|})]=\tilde{r}
\end{equation}
and therefore
 \begin{equation}
\inf \frac{x^*}{\|x^*\|}(K_{\delta})\geqslant \inf \frac{x^*}{\|x^*\|}(B)= \frac{x^*}{\|x^*\|}(\tilde{x})-\tilde{r}>0. 
 \end{equation}
  Hence, by Lemma $\ref{phelp's lemma},$ 
\begin{equation}\label{phel}
\Big\|x^*_0-\frac{x^*}{\|x^*\|}\Big\|<2\varepsilon + 2\delta.
\end{equation}  
Finally by using  $\eqref{phel}$, we have
\begin{equation}
\Big\|x^*_0 - x^*\Big\|\leqslant \Big\|x^*_0-\frac{x^*}{\|x^*\|}\Big\| + \Big\|\frac{x^*}{\|x^*\|} -x^*\Big\|<2\varepsilon + 2\delta +(1-\|x^*\|)<  3 \varepsilon.
\end{equation}   
  Hence, diam$(S)<6\varepsilon$.

(iii) $\Rightarrow$ (ii). Straightforward.

(ii) $\Rightarrow$ (iii). Let $\varepsilon>0.$ Then there exists $x^*_0\in S_{X^*}$ satisfying the condition in (ii). Let $C$ be a subset in $B_{X^{**}}$ such that $\inf x^*_0(C)> \varepsilon.$ 
 Choose $\delta >0$ such that $\varepsilon < \delta < \inf x^*_0(C)$. Consider $$A=\{ x\in B_X : x^*_0(x)>\delta\}.$$ 
Then $C\subset \overline{A}^{w^*}.$
 Indeed, let $x\in C\subset B_{X^{**}} = \overline{B_X}^{w^*}$. Then there exists a net $(x_{\lambda})$ in $B_X$ such that $w^*$-$\lim \limits_{\lambda} x_{\lambda}= x.$ Since $x^*_0(x)>\delta,$ then there exists $\lambda_0$ such that $x^*_0(x_{\lambda})>\delta$ for all $\lambda \geqslant \lambda_0.$ That gives
  $  x_{\lambda} \in A $ for all $\lambda \geqslant \lambda_0.$ Hence $x\in \overline{A}^{w^*}.$
  
  Also, $A$ is a subset of $B_X$ with $\inf x^*_0(A)>\varepsilon$.
 Then by (ii), there exists closed ball $B= \tilde{x} + \tilde{r} B_X$ in $X$ such that $A\subset B$ and $\inf x^*_0(B)>0$.
 Therefore  
 $$C\subset \overline{A}^{w^*} \subset \tilde{x} + \tilde{r} B_{X^{**}} \ \ \mathrm{and} \ \  \inf x^*_0 (\tilde{x} + \tilde{r} B_{X^{**}})>0.$$
 

\end{proof}

Taking into account Proposition $\ref{A1}$ and Theorem $\ref{ch w star bdp}$, we get the following.

\begin{corollary}\label{ch bdp}
For a Banach space $X,$ the following are equivalent :
\begin{enumerate} 
\item $X$ has $BDP.$
\item Given $\varepsilon>0,$ there exists $x_0\in S_{X}$ such that 
for every subset $C$ of $B_{X^*}$ with $\inf x_0(C)> \varepsilon,$ there exists a closed ball $B$ in $X^*$ satisfying $C\subset B$ and $\inf x_0(B)>0.$
\end{enumerate}
\end{corollary}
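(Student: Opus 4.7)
The plan is to derive the corollary as a direct consequence of Theorem~$\ref{bs theorem}$ and Theorem~$\ref{ch w star bdp}$. Theorem~$\ref{bs theorem}$ gives the equivalence $X$ has $BDP$ $\iff$ $X^{**}$ has $w^*$-$BDP$, so it suffices to check that the stated ball separation condition is equivalent to $X^{**}$ having $w^*$-$BDP$. Applying Theorem~$\ref{ch w star bdp}$ with the Banach space $X^*$ in place of $X$ already yields an equivalent formulation, but with the separating functional in $S_{X^{**}}$. The only thing to tighten is to replace that functional by an element of $S_X$ under the canonical embedding $S_X\hookrightarrow S_{X^{**}}$.

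For (i)$\Rightarrow$(ii), I would run the (i)$\Rightarrow$(ii) proof of Theorem~$\ref{ch w star bdp}$ at the level of $X^*$ from the outset. Since $X$ has $BDP$, there is a slice $S(B_X,f_0,2\alpha)$ of $B_X$ of diameter less than $\varepsilon/3$, with $f_0\in S_{X^*}$. Its $w^*$-closure in $B_{X^{**}}$ is the $w^*$-slice $S(B_{X^{**}},f_0,2\alpha)$ of the same norm diameter. Because the slice in $B_X$ is already nonempty, I can pick $x_0\in S(B_X,f_0,2\alpha)\cap S_X$ rather than in the larger set $S(B_{X^{**}},f_0,2\alpha)\cap S_{X^{**}}$. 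Lemma~$\ref{cl lemma}$ (with the roles of base space and dual interchanged) delivers
\[
\sup_{\psi\in B_{X^*}}\frac{\|f_0+\alpha\psi\|+\|f_0-\alpha\psi\|-2}{\alpha}\le\frac{\varepsilon}{3},
\]
and then copying verbatim the inequality chain from the proof of Theorem~$\ref{ch w star bdp}$, any bounded $C\subset X^*$ with $\inf x_0(C)>\varepsilon$ must lie inside the ball $B(f_0/\alpha,\,1/\alpha-\varepsilon/3)\subset X^*$, which avoids $0$.

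For (ii)$\Rightarrow$(i), viewing the given $x_0\in S_X$ as an element of $S_{X^{**}}$ under the canonical embedding, the hypothesis is exactly condition (ii) of Theorem~$\ref{ch w star bdp}$ applied to $X^*$. Hence $X^{**}$ has $w^*$-$BDP$, and Theorem~$\ref{bs theorem}$ then yields $X$ has $BDP$.

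The only delicate step is the forward direction, namely ensuring $x_0\in S_X$ rather than merely $x_0\in S_{X^{**}}$; this is handled by extracting the point directly from the nonempty slice of $B_X$ (available by $BDP$) instead of invoking Theorem~$\ref{ch w star bdp}$ as a black box on $X^*$. Everything else is a routine translation of the earlier proof through the duality $X^{**}=(X^*)^*$.
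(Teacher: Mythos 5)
Your proposal is correct and follows essentially the route the paper intends: the paper derives this corollary by simply citing Theorem~\ref{bs theorem} together with Theorem~\ref{ch w star bdp} applied to $X^*$, offering no further argument. The one point you rightly flag and resolve --- that a blind application of Theorem~\ref{ch w star bdp} to $X^*$ only produces a witness in $S_{X^{**}}$, whereas the corollary demands $x_0\in S_X$, which you obtain by picking the point inside the nonempty slice of $B_X$ whose $w^*$-closure is the small-diameter $w^*$-slice of $B_{X^{**}}$ --- is exactly the detail the paper leaves implicit, and your treatment of it is sound.
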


\begin{proof}
(ii) $\Rightarrow$ (i) follows directly from Proposition $\ref{A1}$ and Theorem $\ref{ch w star bdp}$.

(i) $\Rightarrow$ (ii). Let $\varepsilon>0.$ Since $X^{**}$ has $w^*$-$BDP$ (by Proposition $\ref{A1}$), there exists a $w^*$-slice $S(B_{X^{**}},x_0^*,2\alpha)$ of $B_{X^{**}}$ with diameter less than $\frac{\varepsilon}{2}.$ By the Bishop-Phelps theorem \cite[Theorem 3.1]{DGZ}, we may assume without loss of generality that $x_0^*$ is a norm attaining functional. Choose $x_0 \in S_X$ such that $x_0^*(x_0)=1$. Then following a similar argument as in the proof of (i) $\Rightarrow$ (ii) of Theorem $\ref{ch w star bdp}$, we conclude that $x_0$ satisfies the condition in (ii).
\end{proof}


\begin{lemma}
\cite[Lemma 4.2]{CL1}
In a Banach space $X$ for every $x^*_0\in S_{X^*}$ and for any $w^*$-neighborhood $U$ of $x^*_0$ in $B_{X^*},$ there exists a $w^*$-neighborhood $V$ of $x^*_0$ in $B_{X^*}$ such that $V=\{x^*\in B_{X^*} : x^*(x_i)>\gamma_i>0, \ i=1,2,\ldots,n\}$, for some $x_1,x_2,\ldots,x_n$ in $X$ and $V\subset U.$
\end{lemma}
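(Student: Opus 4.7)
The plan is to reduce to the case where $V$ has the explicit basic form
$$V = \{f \in B_{X^*} : |f(x_i) - f_0(x_i)| < \varepsilon,\ i = 1, \dots, n\}$$
and to construct an explicit $W$ of the advertised shape inside $V$. First I would rewrite each two-sided bound as the pair $f(x_i) > f_0(x_i) - \varepsilon$ and $f(-x_i) > -f_0(x_i) - \varepsilon$. The thresholds on the right are not in general positive, so the heart of the proof is to shift each of the vectors $\pm x_i$ by a large multiple of a suitable norming vector $z$ so that the resulting thresholds become positive, while using the constraint $\|f\| \le 1$ on $B_{X^*}$ to undo the shift in the reverse direction.

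Concretely, fix $\delta > 0$ (to be chosen later, with $\delta < 1/2$) and, using $\|f_0\|=1$, pick $z \in B_X$ with $f_0(z) > 1 - \delta$; in particular $f_0(z) > 1/2$. Then for each $i$ I would choose $t_i, s_i > 0$ large enough that $\eta_i^+ := f_0(x_i) + t_i f_0(z) - \varepsilon/2 > 0$ and $\eta_i^- := -f_0(x_i) + s_i f_0(z) - \varepsilon/2 > 0$. The crucial point is that $t_i, s_i$ may be chosen uniformly bounded by some constant $T$ depending only on $\varepsilon$ and the norms $\|x_i\|$, independently of $\delta$. Setting $y_i^+ = x_i + t_i z$ and $y_i^- = -x_i + s_i z$, I would define
$$W = \{f \in B_{X^*} : f(z) > 1 - 2\delta,\ f(y_i^+) > \eta_i^+,\ f(y_i^-) > \eta_i^-,\ i = 1, \dots, n\}.$$
This $W$ has the required form and contains $f_0$; it remains only to verify $W \subset V$.

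For $f \in W$, expanding $f(y_i^+) > \eta_i^+$ gives $f(x_i) > f_0(x_i) + t_i(f_0(z) - f(z)) - \varepsilon/2$. Since $f(z) \le \|z\| \le 1$ while $f_0(z) > 1 - \delta$, we get $f_0(z) - f(z) > -\delta$, hence $f(x_i) > f_0(x_i) - t_i \delta - \varepsilon/2$. Symmetrically, expanding $f(y_i^-) > \eta_i^-$ and using $f(z) > 1 - 2\delta$ together with $f_0(z) \le 1$ yields $f(x_i) < f_0(x_i) + s_i \delta + \varepsilon/2$. Choosing $\delta$ so small that $T\delta \le \varepsilon/2$ closes the argument. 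The main technical obstacle is this simultaneous balancing: one must make every threshold strictly positive while keeping the total error within $\varepsilon$. It is resolved by fixing the shifts $t_i, s_i$ first using only the rough estimate $f_0(z) > 1/2$, and then shrinking $\delta$ at the very end.
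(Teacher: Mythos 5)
Your argument is correct: the paper gives no proof of this lemma (it is quoted verbatim from Chen--Lin), and your translation-by-a-norming-vector construction --- shifting each $\pm x_i$ by a large multiple of $z$ with $f_0(z)>1-\delta$ to force positive thresholds, fixing the shift sizes $t_i,s_i\le T$ from the crude bound $f_0(z)>\tfrac12$ and only then shrinking $\delta$ so that $T\delta\le\varepsilon/2$ --- is essentially the standard proof of \cite[Lemma 4.2]{CL}. One cosmetic remark: the extra constraint $f(z)>1-2\delta$ in your $W$ is never actually used, since both the lower and the upper estimate on $f(x_i)-f_0(x_i)$ follow from $f(z)\le 1$ and $f_0(z)>1-\delta$ alone; it is harmless but can be dropped.
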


\begin{theorem}\label{ch w star bhp}
For a Banach space $X,$ the following are equivalent :
\begin{enumerate}
\item $X^*$ has $w^*$-$BHP$.
\item Given $\varepsilon>0,$ there exists $x^*_0\in S_{X^*}$ such that 
for every subset $C$ of $B_X$ with $\inf x^*_0(C)> \varepsilon,$  there exist closed balls $B_1,B_2,\ldots,B_n$ in $X$ satisfying $C\subset \overline{\mathrm{co}} (\bigcup\limits_{i=1}^{n} B_i)$ and $\inf x^*_0(\overline{\mathrm{co}} (\bigcup\limits_{i=1}^{n} B_i))> 0$.
\item Given $\varepsilon>0,$ there exists $x^*_0\in S_{X^*}$ such that
for every subset $C$ of $B_{X^{**}}$ with $\inf x^*_0(C)> \varepsilon,$ there exist closed balls $B_1,B_2,\ldots,B_n$ in $X^{**}$ with center in $X$ satisfying $C\subset \overline{\mathrm{co}} (\bigcup\limits_{i=1}^{n} B_i)$ and $\inf x^*_0(\overline{\mathrm{co}} (\bigcup\limits_{i=1}^{n} B_i))> 0$.

 
\end{enumerate}
\end{theorem}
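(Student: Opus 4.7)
The plan is to mirror the three-step structure of Theorem~\ref{ch w star bdp}, with the single $w^*$-slice replaced by a basic $w^*$-open neighbourhood of the form $\{f\in B_{X^*}: f(y_i)>\eta_i\}$ supplied by the preceding lemma. I would prove $(i)\Rightarrow(ii)$ first, then $(ii)\Rightarrow(i)$, and finally $(ii)\Leftrightarrow(iii)$ via a Goldstine-type closure argument.

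For $(i)\Rightarrow(ii)$, given $\varepsilon>0$, use $w^*$-$BHP$ to pick a nonempty relatively $w^*$-open subset of $B_{X^*}$ of diameter less than $\varepsilon/3$, and shrink it by the preceding lemma to a neighbourhood $W=\{f\in B_{X^*}: f(y_i)>\eta_i,\; i=1,\ldots,n\}$. Fix $f_0\in W\cap S_{X^*}$, so that every $g\in W\cap S_{X^*}$ lies within $\varepsilon/3$ of $f_0$. For bounded $C\subseteq B_X$ with $\inf f_0(C)>\varepsilon$, take balls of the form $B_i=B(y_i/\eta_i,\,r_i)$ with $r_i<1$ small enough that $\inf f_0(B_i)\geq 1-r_i>0$; then $f_0$ itself separates $0$ from $\overline{co}(\bigcup_{i=1}^n B_i)$. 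The containment $C\subseteq\overline{co}(\bigcup_{i=1}^n B_i)$ would be obtained by Hahn--Banach: a functional strictly separating some $c\in C$ from this convex hull, after normalisation and a sign adjustment, should be forced close to $f_0$ by the small-diameter property of $W$, and the resulting bound on its values at $y_i/\eta_i$ would contradict $f_0(c)>\varepsilon$.

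For $(ii)\Rightarrow(i)$, given $\varepsilon>0$, take $f_0$ as in (ii) and apply the hypothesis to $K=\{x\in B_X: f_0(x)>\varepsilon\}$ to obtain balls $B_i=B(z_i,r_i)$ with $K\subseteq\overline{co}(\bigcup_{i=1}^n B_i)$ and $0\notin\overline{co}(\bigcup_{i=1}^n B_i)$. Hahn--Banach produces $g^*\in S_{X^*}$ and $\delta>0$ with $g^*(z_i)-r_i\geq\delta$ for every $i$, so the $w^*$-open set $U=\{g\in B_{X^*}: g(z_i)>r_i+\delta/2,\; i=1,\ldots,n\}$ is nonempty; any $g\in U$ satisfies $\inf g(K)\geq\inf g(\overline{co}(\bigcup_{i=1}^n B_i))>0$, and Lemma~\ref{phelp's lemma} gives $\|g-f_0\|<2\varepsilon$, bounding the diameter of $U\cap S_{X^*}$ by $4\varepsilon$ and, as in Theorem~\ref{ch w star bdp}, of $U$ itself. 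The equivalence $(ii)\Leftrightarrow(iii)$ only requires $(ii)\Rightarrow(iii)$: given bounded $C\subseteq rB_{X^{**}}$ with $\inf f_0(C)>\varepsilon$, Goldstine shows $C\subseteq\overline{A}^{w^*}$ for $A=\{x\in rB_X: f_0(x)>\varepsilon\}$, (ii) yields balls in $X$ for $A$, and taking $w^*$-closures in $X^{**}$ produces the required balls with centres in $X$, with exclusion of $0$ preserved by the same linear separation.

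The main obstacle is the ball construction in $(i)\Rightarrow(ii)$: the radii $r_i$ must be small enough that $f_0$ separates $0$ from $\overline{co}(\bigcup_{i=1}^n B_i)$, but large enough that Hahn--Banach forces $C\subseteq\overline{co}(\bigcup_{i=1}^n B_i)$. Balancing these requirements means coupling the margin $\inf f_0(C)>\varepsilon$ with the uniform oscillation bound $\|g-f_0\|<\varepsilon/3$ for $g\in W\cap S_{X^*}$ in a more delicate way than was needed in the single-ball case of Theorem~\ref{ch w star bdp}, and this is the technical heart of the argument.
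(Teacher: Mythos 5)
Your outline of $(ii)\Rightarrow(i)$ and $(ii)\Leftrightarrow(iii)$ is essentially the paper's argument, but the step you yourself flag as ``the technical heart'' of $(i)\Rightarrow(ii)$ is a genuine gap, and the construction you propose does not close it. With balls $B_i=B(y_i/\eta_i,r_i)$ of bounded size, the separation step fails: if $h\in S_{X^*}$ separates some $c\in C\subset B_X$ from $\overline{co}(\bigcup_i B_i)$, the only information you get is $h(y_i)/\eta_i - r_i \geq \inf h(K) > h(c) \geq -1$, i.e.\ $h(y_i)>\eta_i(r_i-1)$, which is a negative lower bound and does not place $h$ in $W$; so the small-diameter property of $W$ never enters. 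The paper's fix is to \emph{dilate}: pick $\beta_i$ with $f_0(x_i)>\beta_i>\eta_i$ and use balls $B(mx_i,m\beta_i)$ with $m$ so large that $\beta_i-\frac{1}{m}>\eta_i$; then $h(mx_i)-m\beta_i>-1$ gives $h(x_i)>\beta_i-\frac{1}{m}>\eta_i$, forcing $h\in W$ and hence $\|h-f_0\|<\varepsilon/3$.

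Even granting that, your sketch has no mechanism to produce the final contradiction. Knowing $h$ is close to $f_0$ only yields $\inf h(K)>h(c)>2\varepsilon/3$; there is nothing contradictory about a separating functional close to $f_0$ taking large values on $K$, and indeed for fixed radii (or for the dilated balls alone) the inclusion $C\subset\overline{co}(\bigcup_i B_i)$ is simply false in general --- already for $n=1$ a single ball $B(mx_1,m\beta_1)$ with $\beta_1<\|x_1\|$ misses part of $B_X$ for large $m$. The paper's second essential ingredient is to adjoin an auxiliary point $x_0\in\frac{\varepsilon}{3}B_X$ with $f_0(x_0)>0$ (a degenerate ball) to the convex hull: then $\inf h(K)\leq h(x_0)\leq\|x_0\|<\varepsilon/3$, which contradicts $\inf h(K)>2\varepsilon/3$. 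Your proposal contains neither the dilation nor the auxiliary point, so $(i)\Rightarrow(ii)$ is not proved. A smaller point: in $(ii)\Rightarrow(i)$ your set $U$ should be intersected with an extra slice $S(B_{X^*},z_0,\varepsilon)$ (with $g(z_0)>1-\varepsilon$ for the separating functional $g$) so that every $h\in U$ has $\|h\|>1-\varepsilon$; otherwise Lemma~\ref{phelp's lemma} only controls $\|f_0-h/\|h\|\|$ and the passage from the diameter of $U\cap S_{X^*}$ to that of $U$ is not justified for a general basic $w^*$-open set, unlike the slice case.
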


\begin{proof}
(i) $\Rightarrow$ (ii). Let $0<\varepsilon <1.$ Since $X^*$ has $w^*$-$BHP$, then there exists a nonempty $w^*$-open subset of $B_{X^*}$
$$V=\{x^*\in B_{X^*} : x^*(x_i)>\gamma_i>0, x_i\in X, \ i=1,2,\ldots,n\}$$ 
for some $x_1 , \ldots, x_n \in X$, with diameter less than $\frac{\varepsilon}{3}.$ We choose $x^*_0\in V\bigcap S_{X^*}$. Then $V \subset B(x^*_0,\frac{\varepsilon}{3}).$
We aim to show that $x^*_0$ satisfies the condition in (ii).

Consider a subset $C\subset B_X$ with
$\inf x^*_0(C)> \varepsilon.$ 
Since $x^*_0\in V,$ then for each $i=1,\ldots,n$, we have $x^*_0(x_i)>\gamma_i$ and therefore we can choose $\alpha_i>0$ and $m>0$ such that $x^*_0(x_i)>\alpha_i> \alpha_i-\frac{1}{m}> \gamma_i>0$ for all $i = 1, \ldots, n$.
Also since $x^*_0\neq 0$, then we can choose $x_0 \in X$ such that $\|x_0\|<\frac{\varepsilon}{3}$ and $x^*_0(x_0)>0.$ Also choose $0<\eta < x^*_0(x_0)$.
 Consider $$K=\overline{\mathrm{co}} (B(mx_1, m\alpha_1) \bigcup \ldots \bigcup B(mx_n, m\alpha_n)\bigcup B(x_0,\eta)).$$ 
 Observe that, 
 $$\inf x^*_0(B(mx_i, m\alpha_i))= x^*_0(mx_i)- m\alpha_i>0 \quad \forall i=1,\ldots,n$$
 $$\mathrm{and} \ \inf x^*_0(B(x_0,\eta))= x^*_0(x_0)- \eta>0.$$
 Therefore,  $\inf x^*_0(K)\geqslant \min\{x^*_0(x_0)- \eta, x^*_0(mx_1)- m\alpha_1, \ldots, x^*_0(mx_n)- m\alpha_n\}  >0.$ \\
We claim that  $C\subset K.$ \\
If not, there exists $y_0\in C\setminus K.$ By Hahn-Banach separation theorem there exists $x_1^*\in S_{X^*}$ such that $\inf x_1^*(K) > x_1^*(y_0)\geqslant -1.$ Therefore, for each $i=1,\ldots,n$, $$x_1^*(mx_i)-m\alpha_i = \inf x_1^* (B(mx_i,m\alpha_i))\geqslant \inf x_1^*(K)>-1$$
That gives, $x_1^*(x_i)>\alpha_i-\frac{1}{m}>\gamma_i$ for all $i=1,\ldots,n$ and hence $x_1^*\in V.$ Thus, $\|x_1^*-x^*_0\|<\frac{\varepsilon}{3}$ and hence we have
\begin{equation}\label{chap 6 equ}
\inf x_1^*(K)> x_1^*(y_0)=x^*_0(y_0)-(x^*_0-x_1^*)(y_0)\geqslant \inf x^*_0(C)-\frac{\varepsilon}{3}>\varepsilon- \frac{\varepsilon}{3} = \frac{2\varepsilon}{3}.
\end{equation}
 Again,
\begin{equation}\label{chap 6 equation}
\inf x_1^*(K) \leqslant x_1^*(x_0) \leqslant \|x_0\| < \frac{\varepsilon}{3}<\frac{2\varepsilon}{3}.
\end{equation} 
   Therefore from $\eqref{chap 6 equ}$ and $\eqref{chap 6 equation}$ we get a contradiction. Hence, our claim is true.

(ii) $\Rightarrow$ (i). Let $\varepsilon>0.$ Then there exists $x^*_0\in S_{X^*}$ satisfying the condition in (ii). Choose $0<\delta <\frac{\varepsilon}{2}$.
 Define $$K_{\delta}=\{x\in B_X: x^*_0(x)>\varepsilon + \delta\}.$$ Then $\inf x^*_0(K_{\delta})>\varepsilon.$ Therefore by (ii), we can choose closed balls $B_i= B(\tilde{x_i},\tilde{r_i})$ $(1\leqslant i \leqslant n)$
in $X$ such that $K_{\delta}\subset \overline{\mathrm{co}} (\bigcup\limits_{i=1}^{n} B_i)$ and $\inf x^*_0(\overline{\mathrm{co}} (\bigcup\limits_{i=1}^{n} B_i))> 0$. Then 
\begin{equation}\label{ju}
x^*_0(\tilde{x_i})-\tilde{r_i}=\inf x^*_0(B_i)> 0 \quad \forall i=1,\ldots,n.
\end{equation}
Choose $z_0\in S_X$ such that $x^*_0(z_0)>1-\varepsilon.$ 
 Consider the $w^*$-open set 
  $$V=\bigcap\limits_{i=1}^{n} S(B_{X^*}, \frac{\tilde{x_i}}{\|\tilde{x_i}\|}, 1-\frac{\tilde{r_i}}{\|\tilde{x_i}\|}) \bigcap S(B_{X^*}, z_0,\varepsilon).$$ 
 Observe that $x^*_0\in V$ (by $\eqref{ju}$) and hence $V$ is nonempty.\\
 Let $x^*\in V.$ Then $x^*(\tilde{x_i})>\tilde{r_i}$ for all $i=1,\ldots,n$. Therefore, 
 \begin{equation}\notag
 \begin{split}
 \inf x^*(K_{\delta}) \geqslant \inf x^* (\overline{\mathrm{co}} (\bigcup\limits_{i=1}^{n} B_i))
 \geqslant \min\{\inf x^*(B_1), \ldots, \inf x^*(B_n) \}\hspace{1.5 cm}\\
  \geqslant \min\{x^*(\tilde{x_1}) - \tilde{r_1}, \ldots, x^*(\tilde{x_n}) - \tilde{r_n}\}\hspace{1 cm}\\
  >0.\hspace{6.8 cm}
 \end{split}
 \end{equation}
Hence, by  Lemma $\ref{phelp's lemma},$ we have 
\begin{equation}\label{bhp ball eq1}
\left\|x^*_0-\frac{x^*}{\|x^*\|}\right\|<2\varepsilon + 2 \delta.
\end{equation} 
Also since $x^*\in V$, then
\begin{equation}\label{bhp ball eq2}
\|x^*\|\geqslant x^*(z_0)>1-\varepsilon.
\end{equation}
Finally taking into account $\eqref{bhp ball eq1}$ and $\eqref{bhp ball eq2}$ we have
$$\|x^*_0-x^*\|\leqslant \left\|x^*_0-\frac{x^*}{\|x^*\|}\right\|+\left\|\frac{x^*}{\|x^*\|}-x^*\right\|<2\varepsilon + 2\delta + (1-\|x^*\|)< 4\varepsilon.$$
Therefore, diam$(V)<8 \varepsilon.$

(iii) $\Rightarrow$ (ii). Straightforward.

(ii) $\Rightarrow$ (iii). Let $\varepsilon>0.$ Then there exists $x^*_0\in S_{X^*}$ satisfying the condition in (ii). Let $C$ be a subset in $B_{X^{**}}$ such that $\inf x^*_0(C)> \varepsilon.$ 
Choose $\delta >0$ such that $\varepsilon < \delta < \inf x^*_0(C)$. Consider $$A=\{ x\in B_X : x^*_0(x)>\delta\}.$$ 
Then $C\subset \overline{A}^{w^*}.$
 Indeed, let $x\in C\subset B_{X^{**}} = \overline{B_X}^{w^*}$. Then there exists a net $(x_{\lambda})$ in $B_X$ such that $w^*$-$\lim \limits_{\lambda} x_{\lambda}= x$. Since $x^*_0(x)>\delta,$ then there exists $\lambda_0$ such that $x^*_0(x_{\lambda})>\delta$ for all $\lambda \geqslant \lambda_0.$ That gives
  $  x_{\lambda} \in A $ for all $\lambda \geqslant \lambda_0.$ Hence $x\in \overline{A}^{w^*}.$
  
  Also, $A$ is a subset of $B_X$ with $\inf x^*_0(A)>\varepsilon$.
 Then by (ii), there exist closed balls $B_i= \tilde{x_i} + \tilde{r_i} B_X$ $(i=1,2,\ldots,n)$  in $X$ such that $A\subset \overline{\mathrm{co}} (\bigcup\limits_{i=1}^{n} B_i)$ and $\inf x^*_0(\overline{\mathrm{co}} (\bigcup\limits_{i=1}^{n} B_i))> 0$.
 Therefore, 
 \begin{center}
$C\subset \overline{A}^{w^*} \subset \overline{\mathrm{co}} \bigcup\limits_{i=1}^{n} (\tilde{x_i} + \tilde{r_i} B_{X^{**}})$ and $\inf x_0^*(\overline{\mathrm{co}} \bigcup\limits_{i=1}^{n} (\tilde{x_i} + \tilde{r_i} B_{X^{**}}))>0$.
 \end{center}

\end{proof}

Taking into account Proposition $\ref{A1}$ and Theorem $\ref{ch w star bhp}$, we get the following.

\begin{corollary}
For a Banach space $X,$ the following are equivalent :
\begin{enumerate}
\item $X$ has $BHP.$
\item Given $\varepsilon>0,$ there exists $x_0\in S_{X}$ such that 
for every subset $C$ of $B_{X^*}$ with $\inf x_0(C)> \varepsilon,$  there exist balls $B_1,B_2,\ldots,B_n$ in $X^*$ satisfying $C\subset \overline{\mathrm{co}} (\bigcup\limits_{i=1}^{n} B_i)$ and $\inf x_0(\overline{\mathrm{co}} (\bigcup\limits_{i=1}^{n} B_i))> 0$.

\end{enumerate}
\end{corollary}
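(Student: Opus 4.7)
The plan is to adapt the proof of Theorem \ref{ch w star bhp} with the roles of $X$ and $X^*$ interchanged: weakly open subsets of $B_X$ are defined by functionals from $X^*$, and the bounded sets $C$ now live in $X^*$. Combining Theorem \ref{bs theorem} with Theorem \ref{ch w star bhp} applied to the base space $X^*$ only yields a functional in $S_{X^{**}}$, whereas the corollary asks for one in $S_X$, so a direct argument is required.

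For $(i)\Rightarrow(ii)$, take $0<\e<1$ and, using $BHP$, pick a nonempty weakly open set $V=\{y\in B_X:f_i(y)>\eta_i,\ i=1,\ldots,n\}$ in $B_X$ with $\mathrm{diam}(V)<\e/3$, where $f_i\in X^*$ and (after shrinking if necessary) $\eta_i>0$. Because $\eta_i>0$, normalizing any $y_0\in V$ gives $y_0/\|y_0\|\in V\cap S_X$, so one may choose $x_0\in V\cap S_X$. For bounded $C\subseteq B_{X^*}$ with $\inf x_0(C)>\e$, mimic the construction in Theorem \ref{ch w star bhp} to form
\[
K=\overline{\mathrm{co}}\bigl(B(mf_1,m\b_1)\cup\cdots\cup B(mf_n,m\b_n)\cup\{h_0\}\bigr)\subseteq X^*,
\]
with $\b_i\in(\eta_i,f_i(x_0))$, $m$ so large that $\b_i-1/m>\eta_i$, and $h_0\in(\e/3)B_{X^*}$ satisfying $x_0(h_0)>0$. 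Then $\inf x_0(K)>0$, so $0\notin K$.

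The critical observation is that each norm-closed ball in $X^*$ is $w^*$-compact by Banach--Alaoglu, and the convex hull of finitely many $w^*$-compact convex sets in $X^*$ is again $w^*$-compact (as a continuous image of a simplex times a product of $w^*$-compact sets). Thus $K$ itself is $w^*$-compact. If $g_0\in C\setminus K$, Hahn--Banach separation in the locally convex space $(X^*,w^*)$ furnishes a $w^*$-continuous linear functional, necessarily of the form $y\in S_X$, with $\inf y(K)>y(g_0)$. The arithmetic of Theorem \ref{ch w star bhp} now goes through unchanged: $y(f_i)>\b_i-1/m>\eta_i$ forces $y\in V$, hence $\|y-x_0\|<\e/3$, and together with $\inf x_0(C)>\e$ one concludes $\inf y(K)>y(g_0)>2\e/3$, contradicting $\inf y(K)\le y(h_0)\le\|h_0\|<\e/3$.

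For $(ii)\Rightarrow(i)$ the argument dualizes the second half of Theorem \ref{ch w star bhp}: put $K=\{g\in B_{X^*}:x_0(g)>\e\}$ and apply (ii) to produce balls $B_i=B(g_i,r_i)\subseteq X^*$ with $K\subseteq\overline{\mathrm{co}}(\bigcup_{i=1}^nB_i)$ and $0\notin\overline{\mathrm{co}}(\bigcup_{i=1}^nB_i)$. By the same $w^*$-compactness observation, separating $\{0\}$ from this set by a $w^*$-continuous functional yields $y\in S_X$ with $y(g_i)>r_i$ for each $i$. Fix $z_0\in S_{X^*}$ with $z_0(y)>1-\e$, and set
\[
V=\bigcap_{i=1}^nS(B_X,g_i/\|g_i\|,1-r_i/\|g_i\|)\cap S(B_X,z_0,\e),
\]
a nonempty weakly open subset of $B_X$ containing $y$. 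For any $y'\in V$, $\inf y'(K)>0$; Lemma \ref{phelp's lemma} gives $\|x_0-y'/\|y'\|\|<2\e$, and $z_0(y')>1-\e$ forces $\|y'\|>1-\e$, whence $\|x_0-y'\|<3\e$ and $\mathrm{diam}(V)<6\e$. The principal obstacle in both directions is this separation step: Hahn--Banach in the dual pair $(X^*,X^{**})$ naturally produces only a functional in $X^{**}$, whereas the stronger conclusion places it in the predual $X$. It is resolved by the $w^*$-compactness of convex hulls of finitely many norm-closed balls in $X^*$, which allows separation within the dual system $(X,X^*)$.
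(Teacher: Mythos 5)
Your proof is correct, and it takes a genuinely different route from the paper: the paper disposes of this corollary in one line by combining Theorem \ref{bs theorem} ($X$ has $BHP$ iff $X^{**}$ has $w^*$-$BHP$) with Theorem \ref{ch w star bhp} applied to the Banach space $X^*$. You are right that, read literally, that combination only produces a functional in $S_{X^{**}}$ rather than in $S_X$; to recover the corollary as stated from the cited results one has to look inside the proof of Theorem \ref{ch w star bhp}$(i)\Rightarrow(ii)$ and note that the small $w^*$-open set $V\subset B_{X^{**}}$ is of the form $\{g\in B_{X^{**}}: g(f_i)>\eta_i>0\}$ with $f_i\in X^*$, hence meets $B_X$ by Goldstine's theorem and, after normalizing, meets $S_X$. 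Your direct argument in the predual achieves the same end differently, and the key new ingredient is exactly the one you single out: the separation steps must yield functionals in $X$ rather than $X^{**}$, and you get this from the $w^*$-compactness (Banach--Alaoglu) of the convex hull of finitely many closed balls of $X^*$, so that Hahn--Banach separation can be performed in the dual pair $(X,X^*)$. The rest of the arithmetic transcribes the paper's proof of Theorem \ref{ch w star bhp} verbatim, including the use of Lemma \ref{phelp's lemma} with the roles of $X$ and $X^*$ exchanged. Two minor points, neither fatal and both present at the same level of detail in the paper's own arguments: your ``after shrinking if necessary, $\eta_i>0$'' tacitly invokes the predual analogue of the lemma quoted from \cite[Lemma 4.2]{CL} (weakly open neighbourhoods of points of $S_X$ in $B_X$ can be refined to the form $\{y\in B_X: f_i(y)>\eta_i>0\}$), which holds by the same proof; and the reduction to $C\subset B_{X^*}$ is not literally a rescaling, but is harmless because $m$ is chosen after $C$ is given. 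In sum, your proof is self-contained and more careful about where $x_0$ lives, while the paper's is shorter but leaves the Goldstine step to the reader.
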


\begin{proof}
Let $0<\varepsilon <1.$ Since $X^{**}$ has $w^*$-$BHP$ (by Proposition $\ref{A1}$), there exists a nonempty $w^*$-open subset $V$ of $B_{X^{**}}$
 with diameter less than $\frac{\varepsilon}{3}.$ By Goldstine’s theorem, we can choose $x_0 \in  V\bigcap S_{X}$ such that $V \subset B(x_0,\frac{\varepsilon}{3}).$ Then following a similar argument as in the proof of (i) $\Rightarrow$ (ii) of Theorem $\ref{ch w star bhp}$, we conclude that $x_0$ satisfies the condition in (ii).
\end{proof}


\begin{proposition}\label{CL result 1}
\cite[Theorem 2.1]{CL1}
Let $X$ be a Banach space and $C$ be a bounded subset of $X^{**}.$ Then there exists a ball $B$ in $X^{**}$ with center in $X$ such that $C\subset B$ and $0\notin B$ if and only if $\overline{\mathrm{co}}^{w^*} B_1(C)\neq B_{X^*}$, where $B_1(C)= \{x^*\in B_{X^*}: x^{**}(x^*)\leqslant 0$ for some $x^{**}\in C\}.$
\end{proposition}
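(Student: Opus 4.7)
\emph{Proof proposal.} The plan is to establish the two directions separately, with the reverse direction powered by Hahn--Banach separation in $(X^*,w^*)$ followed by an explicit construction of the ball with center a suitable scalar multiple of the separating element.

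For $(\Rightarrow)$, assume the ball $B^{**}=y_0+rB_{X^{**}}$ with $y_0\in X$ satisfies $A\subset B^{**}$ and $0\notin B^{**}$, so that $\|y_0\|>r$. For any $f\in B_1(A)$, pick $x^{**}\in A$ with $x^{**}(f)\leq 0$ and write $x^{**}=y_0+z^{**}$ with $\|z^{**}\|_{X^{**}}\leq r$; a direct computation gives $f(y_0)=x^{**}(f)-z^{**}(f)\leq 0+r=r$. Since $y_0\in X$, the functional $f\mapsto f(y_0)$ is $w^*$-continuous on $X^*$, so the bound $f(y_0)\leq r$ propagates from $B_1(A)$ to $\overline{co}^{w^*}B_1(A)$. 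A norming functional $g\in S_{X^*}$ for $y_0$ then satisfies $g(y_0)=\|y_0\|>r$, exhibiting $g\in B_{X^*}\setminus\overline{co}^{w^*}B_1(A)$.

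For $(\Leftarrow)$, pick $f_0\in B_{X^*}\setminus\overline{co}^{w^*}B_1(A)$. Because $\overline{co}^{w^*}B_1(A)$ is $w^*$-closed and convex and $X$ is the continuous dual of $(X^*,w^*)$, Hahn--Banach separation supplies $x_0\in X$ and $\gamma\in\mathbb{R}$ with $f(x_0)\leq \gamma<f_0(x_0)$ for every $f\in\overline{co}^{w^*}B_1(A)$. Since $0\in B_1(A)$ (trivially $x^{**}(0)=0\leq 0$), we have $\gamma\geq 0$, and since $\|f_0\|\leq 1$, also $\|x_0\|\geq f_0(x_0)>\gamma$. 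The contrapositive yields the key implication: whenever $f\in B_{X^*}$ satisfies $(-f)(x_0)>\gamma$, one has $-f\notin B_1(A)$, hence $x^{**}(f)<0$ for every $x^{**}\in A$.

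To construct the ball, take $y_0:=\lambda x_0\in X$ for $\lambda>0$ to be tuned, and estimate $\|x^{**}-\lambda x_0\|_{X^{**}}=\sup_{f\in B_{X^*}}[x^{**}(f)-\lambda f(x_0)]$. Choose $\eta\in(0,\|x_0\|-\gamma)$ and split $B_{X^*}$ according to whether $f(x_0)+\|x_0\|\geq\eta$ or not. On the first region, the boundedness $M:=\sup_{x^{**}\in A}\|x^{**}\|<\infty$ together with the choice $\lambda>M/\eta$ yields $x^{**}(f)-\lambda f(x_0)\leq \lambda\|x_0\|-(\lambda\eta-M)$ with positive slack. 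On the complementary region, $(-f)(x_0)>\|x_0\|-\eta>\gamma$ triggers the key implication and forces $x^{**}(f)\leq 0$, whence $x^{**}(f)-\lambda f(x_0)\leq-\lambda f(x_0)\leq\lambda\|x_0\|$, with the strict slack coming from the gap $\|x_0\|-\gamma-\eta>0$. Choosing $r$ strictly between the resulting sup and $\lambda\|x_0\|$ produces the required ball. The hard part will be securing uniformity of the second estimate as $f(x_0)\to -\|x_0\|$ and $\lambda(\|x_0\|+f(x_0))$ degenerates; this is precisely where the strict separation gap $\|x_0\|-\gamma>0$, together with the freedom to pick $\eta<\|x_0\|-\gamma$, must be used in full force to close the bound uniformly.
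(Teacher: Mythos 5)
The paper does not prove this proposition at all --- it is quoted from \cite{CL} (Theorem 2.1) --- so there is no in-paper argument to compare with; I can only assess your attempt on its own terms. Your forward direction is correct and complete: the bound $f(y_0)\leq r$ on $B_1(A)$, its propagation to the $w^*$-closed convex hull via $w^*$-continuity of evaluation at $y_0\in X$, and a norming functional for $y_0$ together finish that half.

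The reverse direction, however, stops exactly where the real difficulty lies, and the device you propose cannot close it. In the region where $(-f)(x_0)>\gamma$, the separation yields only the sign condition $x^{**}(f)<0$, with no quantitative lower bound on $-x^{**}(f)$; hence $x^{**}(f)-\lambda f(x_0)<\lambda\|x_0\|$ holds pointwise, but the supremum over $f$ can still equal $\lambda\|x_0\|$ (take $f_n$ with $f_n(x_0)\to-\|x_0\|$ while $x^{**}(f_n)\to 0^-$), in which case no admissible radius $r<\lambda\|x_0\|$ exists. The gap $\|x_0\|-\gamma>0$ constrains $f(x_0)$, not $x^{**}(f)$, so it cannot supply the missing slack. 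In fact no sharpening of the estimates can: with the statement exactly as transcribed, the implication $(\Leftarrow)$ is false for closed balls. Take $X=\mathbb{R}$ and $A=\{1/n:n\in\mathbb{N}\}$; then $B_1(A)=[-1,0]$, so $\overline{co}^{w^*}B_1(A)\neq B_{X^*}$, yet every closed interval containing $A$ contains $0$. The equivalence survives only if either ``ball'' is read as an open ball, or $B_1(A)$ is defined by $\inf_{x^{**}\in A}x^{**}(f)\leq 0$ rather than by attainment at some $x^{**}\in A$ --- the two definitions differ precisely when the infimum is $0$ and not attained, which is exactly the degenerate regime you identified. So the missing step is not a finer estimate but a repair of the statement (one should consult Chen--Lin's original formulation), after which the construction of the ball has to be rerun; even for the open ball of radius $\lambda\|x_0\|$ your case analysis does not yet show that each individual $x^{**}\in A$ satisfies $\|x^{**}-\lambda x_0\|<\lambda\|x_0\|$.
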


By using the same technique as in the proof of Proposition $\ref{CL result 1}$, we establish the following conclusion.
 
\begin{proposition}\label{CLcoro result 1}
Let $X$ be a Banach space and $C$ be a bounded subset of $X.$ Then there exists a ball $B$ in $X$ such that $C\subset B$ and $0\notin B$ if and only if $\overline{\mathrm{co}}^{w^*} B_1(C)\neq B_{X^*}$, where $B_1(C)= \{x^*\in B_{X^*}: x^*(x)\leqslant 0$ for some $x\in C\}.$
\end{proposition}

\begin{theorem}\label{ch w star bscsp}
Let $X$ be a Banach space. Consider the following statements.
\begin{enumerate}
\item $X^*$ has $w^*$-$BSCSP.$

\item Given $\varepsilon>0,$ there exists $x^*_0 \in B_{X^*}\setminus \{0\}$ such that
for every subset $C$ of $B_X$ with $\inf x^*_0(C)>\varepsilon,$  there exist balls $B_1,B_2,\ldots,B_n$ in $X$ satisfying $C\subset \bigcup\limits_{i=1}^{n} B_i$ and $0\notin \bigcup\limits_{i=1}^{n} B_i.$
\item Given $\varepsilon>0,$ there exists $x^*_0\in B_{X^*}\setminus \{0\}$ such that
for every subset $C$ of $B_{X^{**}}$ with $\inf x^*_0(C)>\varepsilon,$  there exist balls $B_1,B_2,\ldots,B_n$ in $X^{**}$ with center in $X$ satisfying $C\subset \bigcup\limits_{i=1}^{n} B_i$ and $0\notin \bigcup\limits_{i=1}^{n} B_i.$

\end{enumerate}
Then (i) $\Rightarrow$ (ii) $\Leftrightarrow$ (iii).
 \end{theorem}

\begin{proof}


(i) $\Rightarrow$ (ii). Let $0<\varepsilon <1.$ Since $X^*$ has $w^*$-$BSCSP,$ then there exists a convex combination of $w^*$-slices
$S=\sum\limits_{i=1}^{n} \lambda_i S_i$ of $B_{X^*}$  with diameter less than $\frac{\varepsilon}{2}.$ We choose $x^*_0  \in S\setminus \{0\}$ such that $S \subset B(x^*_0,\frac{\varepsilon}{2}).$ We aim to show that $x^*_0$ satisfies the condition in (ii).

Consider a subset $C$ of $B_X$ with $\inf x^*_0(C)>\varepsilon.$ 
 Observe that for any $c\in C$ and $x^*\in S$
$$x^*(c)
= x^*_0(c)-( x^*_0 - x^* )(c)
> \varepsilon - \|x^*_0-x^*\| 
\geqslant \varepsilon - \frac{\varepsilon}{2}
= \frac{\varepsilon}{2} .$$
Therefore,
\begin{equation}\label{ball sep bscspeq2}
\inf x^* (C) \geqslant \frac{\varepsilon}{2} >0 \quad \forall x^*\in S.
\end{equation}
We claim that  $C=\bigcup\limits_{i=1}^{n} C_i$ where $C_i= \{ x\in C : x^*(x)> 0 \ \forall x^*\in S_i\}.$\\
If not, there exists $x\in C \setminus \bigcup\limits_{i=1}^{n}C_i.$ Then for each $i=1,\ldots,n$ there exist $x_i^* \in S_i$ such that 
\begin{equation}\label{ball sep bscspeq}
x_i^*(x)\leqslant 0 \quad \forall i=1,\ldots,n.
\end{equation}
 Let $x^*= \sum\limits_{i=1}^{n} \lambda_i x_i^*.$ Then $x^*\in S$ and by $\eqref{ball sep bscspeq}$, we have $x^*(x)\leqslant 0$. This contradicts $\eqref{ball sep bscspeq2}$. Hence, our claim is true.

  For each $i=1,\ldots,n$, we define
  \begin{center}
 $B_1(C_i)=\{x^*\in B_{X^*} : x^*(x)\leqslant 0 $ for some $x\in C_i \}.$  
  \end{center}
  Then $B_1(C_i) \bigcap S_i =\emptyset$ for all $i=1,\ldots,n$.  
  By Proposition $\ref{CLcoro result 1}$, for each $i=1,\ldots,n$, there exist balls $B_i$  in $X$ such that 
$C_i \subset B_i,$   $  0\notin B_i.$  
Thus, $C \subset \bigcup\limits_{i=1}^{n} B_i$ and $ 0\notin \bigcup\limits_{i=1}^{n} B_i.$ 

(iii) $\Rightarrow$ (ii). Straightforward.

(ii) $\Rightarrow$ (iii). Let $\varepsilon>0.$ Then there exists $x^*_0 \in B_{X^*}\setminus \{0\}$ satisfying the condition in (ii).
  Let $C$ be a subset of $B_{X^{**}}$ such that $\inf x^*_0(C)> \varepsilon.$ 
   Choose $\delta >0$ such that $\varepsilon < \delta < \inf x^*_0(C)$.
   Consider $$A=\{ x\in B_X : x^*_0(x)>\delta\}.$$
Then  $C\subset \overline{A}^{w^*}.$ 
Indeed, let $x\in C\subset B_{X^{**}} = \overline{B_X}^{w^*}$. Then there exists a net $(x_{\lambda})$ in $B_X$ such that $w^*$-$\lim\limits \ x_{\lambda}= x.$ Since $x^*_0(x)>\delta,$ then there exists $\lambda_0$ such that $x^*_0(x_{\lambda})>\delta$ for all $\lambda \geqslant \lambda_0.$ That gives $x_{\lambda} \in A$  for all $\lambda \geqslant \lambda_0.$ Hence $x\in \overline{A}^{w^*}.$

Also, $A$ is a subset of $B_X$ with $\inf x^*_0(A)>\varepsilon.$ Then by (ii), there exist balls $B_i= x_i + r_i B_X$ $(i=1,2,\ldots,n)$ in $X$ such that $A\subset \bigcup\limits_{i=1}^{n} B_i$ and $0\notin \bigcup\limits_{i=1}^{n} B_i$. Therefore,

\hspace{2.5 cm} $C\subset \overline{A}^{w^*} \subset  \bigcup\limits_{i=1}^{n} (x_i + r_i B_{X^{**}})$ and $0\notin  \bigcup\limits_{i=1}^{n} (x_i + r_i B_{X^{**}}).$
\end{proof}

\begin{remark}
The validity of the implication (ii) $\Rightarrow$ (i) of Theorem $\ref{ch w star bscsp}$ is not known in general.
\end{remark}

Taking into account Proposition $\ref{A1}$ and Theorem $\ref{ch w star bscsp}$, we get the following.

\begin{corollary}
Suppose $X$ is a Banach space with $BSCSP.$ Then 
for $\varepsilon>0,$ there exists $x_0 \in B_{X}\setminus \{0\}$ such that
for every bounded set $C$ of $X^*$ with $\inf x_0(C)>\varepsilon,$ there exist balls $B_1,B_2,\ldots,B_n$ in $X^*$ satisfying $C\subset \bigcup\limits_{i=1}^{n} B_i$ and $0\notin \bigcup\limits_{i=1}^{n} B_i.$
\end{corollary}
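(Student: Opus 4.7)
The plan is to combine Theorem \ref{bs theorem} with the proof scheme of Proposition \ref{ch w star bscsp}(i), applied to the Banach space $X^*$ in place of $X$, while making one refinement to land the witness point in $B_X$ rather than only in $B_{X^{**}}$. First, Theorem \ref{bs theorem} promotes the hypothesis: since $X$ has $BSCSP$, the bidual $X^{**}$ has $w^*$-$BSCSP$. Fixing $\varepsilon>0$, this produces a convex combination of $w^*$-slices $S=\sum_{i=1}^{n}\lambda_i S_i$ of $B_{X^{**}}$ with $\mathrm{diam}(S)<\varepsilon/2$, where each $S_i=\{\xi\in B_{X^{**}}:\xi(g_i)>1-\alpha_i\}$ is defined by some $g_i\in X^*$.

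The direct application of Proposition \ref{ch w star bscsp}(i) to $X^*$ would only yield a center $f_0\in B_{X^{**}}$, whereas the Corollary demands $x_0\in B_X$; this is the one place the proof needs care. The fix uses Goldstine: each individual slice $S_i$ is relatively $w^*$-open in $B_{X^{**}}$ (even though $S$ itself need not be), so $B_X\cap S_i\neq\emptyset$. Selecting $x_i\in S_i\cap B_X$ and putting $x_0:=\sum_{i=1}^{n}\lambda_i x_i$ gives a point of $B_X$ lying in $S$, whence $S\subset B(x_0,\varepsilon/2)$ in the norm of $X^{**}$.

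From this point I would run the argument of Proposition \ref{ch w star bscsp}(i) mutatis mutandis. Given a bounded $C\subset X^*$ with $\inf_{g\in C} g(x_0)>\varepsilon$ (assume $C\subset B_{X^*}$), the estimate $\xi(g)=x_0(g)-(x_0-\xi)(g)\geqslant \varepsilon-\varepsilon/2=\varepsilon/2$ for every $\xi\in S$, $g\in C$ holds. Writing $C_i=\{g\in C:\xi(g)>0\ \forall\xi\in S_i\}$, the convex-combination trick shows $C=\bigcup_{i=1}^n C_i$: any $g$ missing every $C_i$ would admit $\xi_i\in S_i$ with $\xi_i(g)\leqslant 0$, forcing $(\sum\lambda_i\xi_i)(g)\leqslant 0$, contradicting the estimate. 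Then $S_i\cap B_1(C_i)=\emptyset$ for $B_1(C_i)=\{\xi\in B_{X^{**}}:\xi(g)\leqslant 0\text{ for some }g\in C_i\}$, so $B_1(C_i)$ lies in the $w^*$-closed proper half-space $\{\xi\in B_{X^{**}}:\xi(g_i)\leqslant 1-\alpha_i\}$, giving $\overline{\mathrm{co}}^{w^*}B_1(C_i)\neq B_{X^{**}}$. The Corollary to Proposition \ref{CL result 1} with $X$ replaced by $X^*$ then delivers balls $B_i\subset X^*$ with $C_i\subset B_i$ and $0\notin B_i$, so $C\subset\bigcup_{i=1}^n B_i$ and $0\notin\bigcup_{i=1}^n B_i$.

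The main obstacle is exactly the upgrade from $f_0\in B_{X^{**}}$ to $x_0\in B_X$; everything else is a transcription of the earlier proof with $X$ replaced by $X^*$. The key observation that saves the day is that although the convex combination $S$ of $w^*$-slices need not be $w^*$-open, the individual slices $S_i$ are, so Goldstine density may be applied slice-by-slice and then recombined convexly to stay inside $B_X$.
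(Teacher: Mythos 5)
Your proposal is correct and follows essentially the same route as the paper, which obtains the corollary simply by ``taking into account'' Theorem \ref{bs theorem} and Proposition \ref{ch w star bscsp} applied to $X^*$, with no further argument. Your Goldstine refinement --- choosing $x_i\in S_i\cap B_X$ in each individual $w^*$-slice of $B_{X^{**}}$ and setting $x_0=\sum_{i=1}^n\lambda_i x_i$ so that the witness lies in $B_X$ rather than merely in $B_{X^{**}}$ --- is precisely the point that the paper's one-line derivation leaves implicit, and you handle it correctly.
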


\begin{proof}
Let $0<\varepsilon <1.$ Since $X^{**}$ has $w^*$-$BSCSP$ (by Proposition $\ref{A1}$), there exists a convex combination of $w^*$-slices
$S=\sum\limits_{i=1}^{n} \lambda_i S_i$ of $B_{X^{**}}$  with diameter less than $\frac{\varepsilon}{2}.$ By Goldstine’s theorem, we can choose $x_0  \in B_X \bigcap (S\setminus \{0\})$ such that $S \subset B(x_0,\frac{\varepsilon}{2}).$ Then following a similar argument as in the proof of (i) $\Rightarrow$ (ii) of Theorem $\ref{ch w star bscsp}$, we conclude that $x_0$ satisfies the required condition.
\end{proof}



\section{Semi Denting, semi PC and semi SCS Points}
\begin{definition}
Let $X$ be a Banach space. A point $x\in B_{X}$ is said to be  
\begin{enumerate}
	\item\cite{CL2} \emph{semi denting point}  of $B_{X}$ if for every $\e > 0$, there exists slice $S$ of $B_{X}$ such that  $S\subset B(x,\varepsilon)$. 
\item \emph{semi Point of Continuity} (semi PC) of $B_{X}$ if for every $\varepsilon >0,$ there exists a weakly open set $V$ in $B_{X}$ such that $V\subset B(x,\varepsilon).$
\item \emph{semi Small Combination of Slice} (semi SCS) point of $B_{X}$ if for every $\varepsilon >0,$ there exists a convex combination of slices $\sum\limits_{i=1}^{n} \lambda_i S_i$ in $B_{X}$ such that $\sum\limits_{i=1}^{n} \lambda_i S_i\subset B(x,\varepsilon).$
\end{enumerate}
Analogously, we can define semi $w^*$-denting point \index{Semi $w^*$-denting point}, semi $w^*$-PC \index{Semi $w^*$-PC} and semi $w^*$-SCS \index{Semi $w^*$-SCS} of $B_{X^*}$.
\end{definition}

\begin{remark}
It is clear that the existence of a semi denting (respectively, semi PC) point in $B_X$ implies that $X$ has the BDP (respectively, BHP).
However, it is not known whether the converse holds in general. It is true that $X$ has $BSCSP$ if and only if 0 is a SCS point in $B_X.$ Indeed, if $X$ has $BSCSP$, then for $\varepsilon >0$ there exists a convex combination of slices $T$ of $B_X$ with diameter less than $\varepsilon$. The convex combination of slices $T'=\frac{1}{2} T + \frac{1}{2} (-T)$ of $B_X$ then contains 0 and has diameter less than $\varepsilon$.  It remains unknown whether $BSCSP$ ensures the existence of any nonzero SCS point or nonzero semi SCS point in $B_X$.
\end{remark}

\begin{remark}\label{dent-ext}
If a point $x_0^* \in S_{X^*}$ is a semi $w^*$-denting point of $B_{X^*}$, then there exists a sequence $(x_n^*)_n$ of extreme points in $S_{X^*}$ that converges to $x_0^*$ in norm.
Indeed,
  let $x_0^*\in S_{X^*}$ be a semi $w^*$-denting point of $B_{X^*}$. Then there exists a sequence $(S_n)_n$ of $w^*$-slices of $B_{X^*}$ such that $S_n \subset B(x_0^*, \frac{1}{n})$ for all $n\in \mathbb{N}$. By the Krein-Milman Property (KMP) and the extremal structure of slices (that is, if $S$ is a slice of $B_{X^*}$ and $\sum\limits_{i=1}^{l} \lambda_i a_i \in S$ with $0<\lambda_i < 1$ and $\sum\limits_{i=1}^{l} \lambda_i=1$, then $a_i \in S$ for some $i$), each $S_n$ contains an extreme point $x_n^*$ (say) of $B_{X^*}$. Consequently, $(x_n^*)_n$ converges to $x_0^*$ in norm. However, unlike the case of $w^*$-denting points, it is not known whether every semi $w^*$-denting point must be an extreme point.
\end{remark}

\begin{theorem}\label{w star semi denting G}
\cite[Theorem 4.1]{G}
Let $X$ be a Banach space and $x_0^*\in S_{X^*}$. Then $x_0^*$ is semi $w^*$-denting point of $B_{X^*}$ if and only if for every bounded set $C\subset X$ with $\inf x^*_0(C)>0$, there exists a  closed ball $B$ in $X$ such that $C\subset B$ and $0\notin B.$
\end{theorem}

\begin{theorem}\label{w star semi denting char}
Let $X$ be a Banach space, $x^*_0\in S_{X^*}$ and $H=\{x\in X : x^*_0(x)=0\}$. Then the following are equivalent :
\begin{enumerate}
\item $x^*_0$ is semi $w^*$-denting point of $B_{X^*}.$
\item For every bounded set $C$ of $X^{**}$ with $\inf x^*_0(C)> 0,$  there exists closed ball $B$ in $X^{**}$ with center in $X$ such that $C\subset B$ and $0\notin B.$

\item For every bounded set $C$ of $X$ with $\inf x^*_0(C)> 0,$
there exists a family of closed balls $\{B_i:i\in I\}$ in $X$ such that $C\subset \bigcap\limits_{i\in I} B_i $ 
and $(\bigcap \limits_{i\in I} B_i) \bigcap H =\emptyset.$
\end{enumerate}
\end{theorem}

\begin{proof}
(ii) $\Rightarrow$ (i). Follows from Theorem $\ref{w star semi denting G}$.

(i) $\Rightarrow$ (ii). 
Let $C$ be a bounded set in $X^{**}$ such that $\inf x^*_0(C) >0.$ Thus, $C\subset rB_{X^{**}}$ for some $r>0.$ Choose $\varepsilon >0$ such that $0<\varepsilon  < \inf x^*_0(C)$. Consider $$A=\{ x\in rB_X : x^*_0(x)>\varepsilon\}.$$ 
Then $C\subset \overline{A}^{w^*}.$
 Indeed, let $x\in C\subset rB_{X^{**}} = \overline{rB_X}^{w^*}$. Then there exists a net $(x_{\lambda})$ in $rB_X$ such that $w^*$-$\lim \limits_{\lambda} x_{\lambda}= x.$ Since $x^*_0(x)>\varepsilon,$ then there exists $\lambda_0$ such that $x^*_0(x_{\lambda})>\varepsilon$ for all $\lambda \geqslant \lambda_0.$ That gives
  $  x_{\lambda} \in A $ for all $\lambda \geqslant \lambda_0.$ Hence $x\in \overline{A}^{w^*}.$
  
  Also, $A$ is a bounded set of $X$ with $\inf x^*_0(A) \geqslant \varepsilon >0$.
 Then by Theorem $\ref{w star semi denting G}$, there exists a closed ball $B= \tilde{x} + \tilde{r} B_X$ in $X$ such that $A\subset B$ and $0\notin B$. Therefore  
 $$C\subset \overline{A}^{w^*} \subset \tilde{x} + \tilde{r} B_{X^{**}} \ \ \mathrm{and} \ \ 0\notin \tilde{x} + \tilde{r} B_{X^{**}}.$$

(i) $\Rightarrow$ (iii).
Let $C$ be a bounded set in $X$ with $\inf x^*_0(C)>0$.
Let $x\in H$. Then $C-x$ is a bounded set in $X$ with $\inf x^*_0(C-x)>0 $. By Theorem $\ref{w star semi denting G}$, there exists a closed ball $B=B(z_x, r_x)$ in $X$ such that 
$C-x\subset B(z_x, r_x)$ and $0\notin B(z_x, r_x)$.
Therefore, 
$C\subset B(x+z_x, r_x)$ and $x\notin B(x+z_x, r_x)$.
Let $B_x= B(x+z_x, r_x)$.
Hence, 
$C\subset \bigcap\limits_{x\in H} B_x $ 
and $(\bigcap \limits_{x\in H} B_x) \bigcap H =\emptyset.$

(iii) $\Rightarrow$ (i). 
Let $C$ be a bounded set in $X$ with $\inf x^*_0(C) >0$. 
 Therefore by (iii), we can choose a family $\{B_i:i\in I\}$ of closed balls in $X$ such that $C\subset \bigcap\limits_{i\in I} B_i $ 
and $(\bigcap \limits_{i\in I} B_i) \bigcap H =\emptyset.$ Since $0\in H$, then there exists $j\in I$ such that $0\notin B_j$. Hence, $C\subset B_j$ and $0\notin B_j$.
\end{proof}

\begin{corollary}\label{w star semi denting coro}
Let $X$ be a Banach space, $x_0\in S_X$ and $H=\{x^*\in X^* : x^*(x_0)=0\}$. Then
the following are equivalent :
\begin{enumerate}
\item $x_0$ is semi denting point of $B_X$.
\item For every bounded set $C$ of $X^*$ with $\inf x_0(C)> 0,$  there exists closed ball $B$ in $X^*$ such that $C\subset B$ and $0\notin B.$

\item For every bounded set $C$ of $X^*$ with $\inf x_0(C)> 0,$
there exists  a family of closed balls $\{B_i:i\in I\}$ in $X$ such that 
$C\subset \bigcap\limits_{i\in I} B_i $ 
and $(\bigcap \limits_{i\in I} B_i) \bigcap H =\emptyset.$
\end{enumerate}
\end{corollary}

\begin{proof}
By the arguments used in the proof of Proposition $\ref{A1}$, it follows that
 $x_0\in S_X$ is semi denting point of $B_X$ if and only if $x_0$ is semi $w^*$-denting point of  $B_{X^{**}}$. 
 The rest follows from Theorem $\ref{w star semi denting char}$.
\end{proof}

\begin{remark}
The fact that $x_0\in S_X$ is semi denting point of $B_X$ if and only if $x_0$ is semi $w^*$-denting point of  $B_{X^{**}}$ was already known and proved in \cite[Lemma 5.1]{G}.
\end{remark}

\begin{remark}
The equivalence (i) $\Leftrightarrow$ (ii) in Corollary $\ref{w star semi denting coro}$ can also be derived from Theorem $\ref{w star semi denting G}$, as shown in \cite[Theorem 5.2]{G}.
\end{remark}

\begin{theorem}\label{w star semi pc char}
Let $X$ be a Banach space, $x^*_0\in S_{X^*}$ and $H=\{x\in X : x^*_0(x)=0\}$. Then
 the following are equivalent :
\begin{enumerate}
\item $x^*_0$ is semi $w^*$-PC of $B_{X^*}.$
\item For every bounded set $C$ of $X$ with $\inf x^*_0(C)>0,$ there exist  closed balls $B_1,\ldots,B_n$ in $X$ such that $C\subset \overline{\mathrm{co}} (\bigcup\limits_{i=1}^{n} B_i)$ and $0\notin \overline{\mathrm{co}} (\bigcup\limits_{i=1}^{n} B_i).$
\item For every bounded set $C$ of $X^{**}$ with $\inf x^*_0(C)> 0,$  there exist closed balls $B_1,B_2,\ldots,B_n$ in $X^{**}$ with center in $X$ such that $C\subset \overline{\mathrm{co}} (\bigcup\limits_{i=1}^{n} B_i)$ and $0\notin \overline{\mathrm{co}} ( \bigcup\limits_{i=1}^{n} B_i).$

\item For every bounded set $C$ of $X$ with $\inf x^*_0(C)>0,$
there exists a family $\{K_i:i\in I\}$, where each $K_i$ is closed convex hull of finitely many closed balls 
 in $X$, such that $C\subset \bigcap\limits_{i\in I} K_i $ 
and $(\bigcap \limits_{i\in I} K_i) \bigcap H =\emptyset.$
\end{enumerate}
\end{theorem}

\begin{proof}
The equivalence (ii) $\Leftrightarrow$ (iv) follows from arguments analogous to those in Theorem $\ref{w star semi denting char}$ (i) $\Leftrightarrow$ (iii).

(i) $\Rightarrow$ (ii).
Consider bounded set $C$ of $X$ with
$\inf x^*_0(C) >0.$  Without loss of generality we can assume that $C\subset B_{X}.$
Otherwise, we can choose $m>0$ such that $\frac{1}{m} C\subset B_X$. Also $\inf x^*_0(C) >0$ implies $\inf x^*_0(\frac{1}{m} C) >0.$ Now, if there exist closed balls $B(z_1,r_1), \ldots, B(z_n,r_n)$ in $X$ such that $\frac{1}{m} C\subset \overline{\mathrm{co}} (\bigcup\limits_{i=1}^{n} B(z_i,r_i))$ and $0\notin \overline{\mathrm{co}} (\bigcup\limits_{i=1}^{n} B(z_i,r_i))$, then $C\subset \overline{\mathrm{co}} (\bigcup\limits_{i=1}^{n} B(mz_i,mr_i))$ and $0\notin \overline{\mathrm{co}} (\bigcup\limits_{i=1}^{n} B(mz_i,mr_i))$.

Choose $0<\varepsilon<\inf x^*_0(C)$.
 Then $\varepsilon <1.$
 Since $x^*_0$ is semi $w^*$-PC of $B_{X^*},$ then there exists a nonempty $w^*$-open set $V$
  of $B_{X^*}$ such that $V\subset B(x^*_0,\frac{\varepsilon}{3}).$
 Then proceeding with similar technique as in the proof of (i)  $\Rightarrow$ (ii) of Theorem $\ref{ch w star bhp}$, we get closed balls $B_1,\ldots,B_n$ in $X$ such that $C\subset \overline{\mathrm{co}} (\bigcup\limits_{i=1}^{n} B_i)$ and $0\notin \overline{\mathrm{co}} (\bigcup\limits_{i=1}^{n} B_i).$
 
(ii) $\Rightarrow$ (i). Let $\varepsilon>0.$ Thus by (ii), we have, for every subset $C$ of $B_X$ with $\inf x^*_0(C)>\varepsilon$, there exist  closed balls $B_1,\ldots,B_n$ in $X$ such that $C\subset \overline{\mathrm{co}} (\bigcup\limits_{i=1}^{n} B_i)$ and $0\notin \overline{\mathrm{co}} (\bigcup\limits_{i=1}^{n} B_i).$
Then proceeding with similar technique as in the proof of (ii) $\Rightarrow $ (i) of Theorem $\ref{ch w star bhp}$, we get a nonempty $w^*$-open set $V$ of $B_{X^*}$ such that $V\subset B(x^*_0, 3 \varepsilon)$.
Hence, $x^*_0$ is semi $w^*$-PC.

(iii) $\Rightarrow$ (ii). Straightforward.

(ii) $\Rightarrow$ (iii).  Let $C$ be a bounded set in $X^{**}$ such that $\inf x^*_0(C)> 0.$ Thus, $C\subset rB_{X^{**}}$ for some $r>0.$ Choose $0 < \delta < \inf x^*_0(C)$. Consider $A=\{ x\in rB_X : x^*_0(x)>\delta\}.$
Then, proceeding with a similar technique as in proof (ii) $\Rightarrow$ (iii) of Theorem $\ref{ch w star bhp}$, we obtain closed balls $B_1, \ldots, B_n$ in $X^{**}$ with centers in X such that
\begin{center}
$C\subset \overline{A}^{w^*} \subset \overline{\mathrm{co}} ( \bigcup\limits_{i=1}^{n} B_i)$ and $0\notin \overline{\mathrm{co}} ( \bigcup\limits_{i=1}^{n} B_i).$ 
 \end{center}

\end{proof}

\begin{corollary}
For a Banach space $X,$ the following are equivalent :
\begin{enumerate}
\item $x_0\in S_X$ is semi PC of $B_X$.
\item For every bounded set $C$ of $X^*$ with $\inf x_0(C)> 0,$ then there exist closed balls $B_1,B_2,\ldots,B_n$ in $X^*$ such that $C\subset \overline{\mathrm{co}} (\bigcup\limits_{i=1}^{n} B_i)$ and $0\notin \overline{\mathrm{co}} ( \bigcup\limits_{i=1}^{n} B_i).$

\item For every bounded set $C$ of $X^*$ with $\inf x_0(C)> 0,$
  then there exists  a family $\{K_i:i\in I\}$, where each $K_i$ is closed convex hull of finitely many closed balls 
  in $X^*$, such that 
$C\subset \bigcap\limits_{i\in I} K_i $ 
and $(\bigcap \limits_{i\in I} K_i) \bigcap H =\emptyset$, where $H=\{x^*\in X^* : x^*(x_0)=0\}$.
\end{enumerate}
\end{corollary}

\begin{proof}
By the arguments used in the proof of Proposition $\ref{A1}$, it follows that
 $x_0\in S_X$ is semi PC of $B_X$ if and only if $x_0$ is semi $w^*$-PC of  $B_{X^{**}}$. 
 The rest follows from Theorem $\ref{w star semi pc char}$.
\end{proof}

However, instead of considering semi $w^*$-PC, if we consider $w^*$-PC of $B_{X^*}$, then we have a stronger ball separation condition that was studied by Chen and Lin \cite[Corollary 4.4]{CL1}.

\begin{theorem} \label{ball sep pc weak}
\cite[Corollary 4.4]{CL1}
Let $X$ be a Banach space, $x_0^* \in S_{X^*}$ and $H=\{ x\in X : x_0^*(x)=0 \} $. 
Then the following are equivalent :
\begin{enumerate}
\item $x_0^* $ is $w^*$-PC of $B_{X^*}$.

\item For every bounded set $C$ of $X$ with $\inf x_0^*(C)> 0,$
 there exist closed balls $B_1,B_2,\ldots,B_n$ in $X$ such that 
 $C\subset \overline{\mathrm{co}} (\bigcup\limits_{i=1}^{n} B_i)$ and $\overline{\mathrm{co}} ( \bigcup\limits_{i=1}^{n} B_i)$ $\bigcap H = \emptyset.$

\end{enumerate}
\end{theorem}
By the arguments used in the proof of Proposition $\ref{A1}$, it follows that
 $x_0\in S_X$ is PC of $B_X$ if and only if $x_0$ is  $w^*$-PC of  $B_{X^{**}}$. Using Theorem $\ref{ball sep pc weak}$, the following holds.

\begin{corollary} \label{ball sep pc}
Let $X$ be a Banach space, $x_0 \in S_{X}$ and $H=\{ x^*\in X^* : x_0(x^*)=0 \}$. Then the following are equivalent:
\begin{enumerate}
\item $x_0$ is PC of $B_{X}$.
\item For every bounded set $C$ of $X^*$ with $\inf x_0(C)> 0,$ then there exist closed balls $B_1,B_2,\ldots,B_n$ in $X^*$ such that $C\subset \overline{\mathrm{co}} (\bigcup\limits_{i=1}^{n} B_i)$ and $\overline{\mathrm{co}} ( \bigcup\limits_{i=1}^{n} B_i)$ $\bigcap H = \emptyset.$

\end{enumerate}
\end{corollary}


In our next result, we will improve the following characterization of Property (II) in $X$ using the concept of semi $w^*$-PC of $B_{X^*}.$
\begin{theorem}\label{propii}
\cite[Theorem 4.6]{CL1}
A Banach space $X$ has Property (II) if and only if the set of $w^*$-PC of $B_{X^*}$ is norm dense in $S_{X^*}$.
\end{theorem}

\begin{theorem}\label{ch propii}
A Banach space $X$ has Property (II) if and only if every $x^*\in S_{X^*}$ is semi $w^*$-PC of $B_{X^*}.$
\end{theorem}

\begin{proof}
Suppose $X$ has Property (II). Let $x^*\in S_{X^*}$ and $\varepsilon >0.$ Then by Theorem $\ref{propii}$, there exists a $w^*$-PC $x^*_0\in S_{X^*}$ such that $\|x^*_0-x^*\|<\frac{\varepsilon}{2}.$ Since $x^*_0$ is $w^*$-PC of $B_{X^*}$, then there exists an $w^*$-open set $V,$  containing $x^*_0,$ in $B_{X^*}$ such that diameter of $V$ is less than $\frac{\varepsilon}{2}$.
 Thus, $V\subset B(x^*,\varepsilon).$ Hence, $x^*$ is a semi $w^*$-PC of $B_{X^*}.$

Conversely, suppose that every $x^*\in S_{X^*}$ is a semi $w^*$-PC of $B_{X^*}.$ Consider any bounded closed convex subset $C$ of $X$ and $x_0\notin C.$ Thus, by Hahn-Banach separation theorem, there exist  $x^*_0\in S_{X^*}$ such that $\inf x^*_0(C)>x^*_0(x_0).$ That gives $\inf x^*_0(C-x_0)>0.$ Since $x^*_0$ is semi $w^*$-PC of $B_{X^*}$, then by Theorem $\ref{w star semi pc char}$ there exist  closed balls $B_1,\ldots,B_n$ in $X$ such that $$(C-x_0)\subset \overline{\mathrm{co}} (\bigcup_{i=1}^{n} B_i) \ \ \mathrm{and} \ \ 0\notin \overline{\mathrm{co}} (\bigcup_{i=1}^{n} B_i).$$ Thus $C\subset \overline{\mathrm{co}} (\bigcup\limits_{i=1}^{n} (x_0 + B_i))$ and $x_0 \notin \overline{\mathrm{co}} (\bigcup\limits_{i=1}^{n} (x_0 + B_i)).$ Hence, $X$ has the Property (II).
\end{proof}

The $w^*$-analogues of MIP and Property (II), introduced in \cite{GGS}, \cite{CL1}, respectively, have been linked to denting points and Points of Continuity of $B_X$ (see \cite{G}, \cite{CL1}). We now recall only the $w^*$-version of Property (II) and its characterization in terms of PC.
A dual space $X^*$ is said to have \emph{$w^*$-(II) Property} if every bounded $w^*$-closed convex set in $X^*$ can be represented as an intersection of closed convex hulls of finitely many closed balls in $X^*$. It was shown in \cite{CL1} that  $X^*$ has $w^*$-(II) Property if and only if the set of PC of $B_X$ is norm dense in $S_{X}$.
Following techniques similar to the ones given in Theorem $\ref{ch propii}$, we get the following.

\begin{corollary}\label{wstarii}
A dual Banach space  $X^*$ has $w^*$-(II) Property if and only if every $x\in S_X$ is semi PC of $B_X$.
\end{corollary}


\begin{theorem}\label{wstarsemiscs}
Let $X$ be a Banach space, $x^*_0 \in B_{X^*}\setminus \{0\}$ 
 and $H=\{x\in X : x^*_0(x)=0\}$. 
 Consider the following statements.
\begin{enumerate}
\item $x^*_0$ is a semi $w^*$-SCS point of $B_{X^*}$.

\item For every bounded set $C$ of $X$ with $\inf x^*_0(C)>0,$  there exist closed balls $B_1,B_2,\ldots,B_n$ in $X$ such that $C\subset \bigcup\limits_{i=1}^{n} B_i$ and $0\notin \bigcup\limits_{i=1}^{n} B_i.$
\item For every bounded set $C$ of $X^{**}$ with $\inf x^*_0(C)>0,$  there exist closed balls $B_1,B_2,\ldots,B_n$ in $X^{**}$ with center in $X$ such that $C\subset \bigcup\limits_{i=1}^{n} B_i$ and $0\notin \bigcup\limits_{i=1}^{n} B_i.$
\item For every bounded set $C$ of $X$ with $\inf x^*_0(C)>0,$
there exists a family $\{T_i:i\in I\}$, where each $T_i$ is a finite union of closed balls 
 in $X$, such that $C\subset \bigcap\limits_{i\in I} T_i $ 
and $(\bigcap \limits_{i\in I} T_i) \bigcap H =\emptyset.$
\end{enumerate}
Then (i) $\Rightarrow$ (ii) $\Leftrightarrow$ (iii) $\Leftrightarrow$ (iv).
\end{theorem}

\begin{proof}

The equivalence (ii) $\Leftrightarrow$ (iv) follows from arguments analogous to those in Theorem $\ref{w star semi denting char}$ (i) $\Leftrightarrow$ (iii).

(i) $\Rightarrow$ (ii). Let $C$ be a bounded set in $X$ with $\inf x^*_0(C)>0$. As in Theorem $\ref{w star semi pc char}$ (i) $\Rightarrow$ (ii), we may assume without loss of generality that $C\subset B_X$.


 Choose $0<\varepsilon <\inf x^*_0(C)$. Since $x^*_0$ is semi $w^*$-SCS point of $B_{X^*},$ then there exists a convex combination of $w^*$-slices $S=\sum\limits_{i=1}^{n} \lambda_i S_i$ of $B_{X^*}$ such that $S\subset B(x^*_0,\frac{\varepsilon}{2})$. Then proceeding with similar technique as in the proof of (i) $\Rightarrow$ (ii) of Theorem $\ref{ch w star bscsp}$, we get closed balls $B_1,B_2,\ldots,B_n$ in $X$ such that $C\subset \bigcup\limits_{i=1}^{n} B_i$ and $0\notin \bigcup\limits_{i=1}^{n} B_i.$
 
 (iii) $\Rightarrow$ (ii). Straightforward.
 
 (ii) $\Rightarrow$ (iii).
  Let $C$ be bounded in $X^{**}$ such that $\inf x^*_0(C)> 0.$ Thus, $C\subset rB_{X^{**}}$ for some $r>0.$ Choose $0 < \delta < \inf x^*_0(C)$.
   Consider $A=\{ x\in rB_X : x^*_0(x)>\delta\}.$
   Then, proceeding with a similar technique as in proof (ii) $\Rightarrow$ (iii) of Theorem $\ref{ch w star bscsp}$, we obtain closed balls $B_1 , \ldots , B_n$ in $X^{**}$ with centers in $X$ such that 
\begin{center}
$C\subset \overline{A}^{w^*} \subset  \bigcup\limits_{i=1}^{n} B_i$ and $0\notin  \bigcup\limits_{i=1}^{n} B_i.$
\end{center}

\end{proof}

\begin{remark}
The validity of the implication (ii) $\Rightarrow$ (i) of Theorem $\ref{wstarsemiscs}$ is not known in general.
\end{remark}

\begin{corollary}
Let $X$ be a Banach space and every nonzero element in $B_{X^*}$ be semi $w^*$-SCS point of $B_{X^*}.$ Then for every bounded set $C$ of $X$ and any hyperplane $H$ in $X$ with $d(C,H)>0$, there exists a family $\{T_i:i\in I\}$, where each $T_i$ is a finite union of closed balls 
 in $X$, such that $C\subset \bigcap\limits_{i\in I} T_i $ 
and $(\bigcap \limits_{i\in I} T_i) \bigcap H =\emptyset.$
\end{corollary}

\begin{corollary}
Let $X$ be a Banach space, $x_0 \in B_{X}\setminus\{0\}$ 
 and $H=\{x^*\in X^* : x^*(x_0)=0\}$. 
Consider the following statements. 
\begin{enumerate}
\item $x_0$ is a semi SCS point of $B_{X}$.

\item For every bounded set $C$ of $X^*$ with $\inf x_0(C)>0,$ then there exist closed balls $B_1,B_2,\ldots,B_n$ in $X^*$ such that $C\subset \bigcup\limits_{i=1}^{n} B_i$ and $0\notin \bigcup\limits_{i=1}^{n} B_i.$
\item For every bounded set $C$ of $X^*$ with $\inf x_0(C)>0,$
 there exists a family $\{T_i:i\in I\}$, where each $T_i$ is a finite union of closed balls 
 in $X^*$, such that $C\subset \bigcap\limits_{i\in I} T_i $ 
and $(\bigcap \limits_{i\in I} T_i) \bigcap H =\emptyset.$
\end{enumerate}
Then (i) $\Rightarrow$ (ii) $\Leftrightarrow$ (iii).
\end{corollary}

\begin{proof}
By the arguments used in the proof of Proposition $\ref{A1}$, it follows that
$x_0\in B_X$ is a semi SCS point of $B_X$ if and only if $x_0$ is semi $w^*$-SCS point of $B_{X^{**}}$.
The result follows from Theorem $\ref{wstarsemiscs}$.
\end{proof}



\begin{corollary}
Let $X$ be a Banach space and every nonzero element in $B_{X}$ be semi SCS point of $B_{X}.$ Then for every bounded set $C$ of $X^*$ and any $w^*$-closed hyperplane $H$ in $X^*$ with $d(C,H)>0$, there exists a family $\{T_i:i\in I\}$, where each $T_i$ is a finite union of closed balls 
 in $X^*$, such that $C\subset \bigcap\limits_{i\in I} T_i $ 
and $(\bigcap \limits_{i\in I} T_i) \bigcap H =\emptyset.$
\end{corollary}

\begin{proposition}\label{scs oneside}
Let $X$ be a Banach space. If $w^*$-SCS points of $B_{X^*}$ are dense in $B_{X^*}$, then all points of $B_{X^*}$ are semi $w^*$-SCS points of $B_{X^*}$.
\end{proposition} 

\begin{proof}
Let $x^*\in B_{X^*}$ and $\varepsilon >0.$ Then there exists a $w^*$-SCS point $x^*_0\in B_{X^*}$ such that $\|x^*_0-x^*\|<\frac{\varepsilon}{2}.$ Since $x^*_0$ is $w^*$-SCS point of $B_{X^*}$, then there exists a convex combination of $w^*$-slices $T,$ containing $x^*_0,$ in $B_{X^*}$ such that diameter of $T$ is less than $\frac{\varepsilon}{2}$.
 Thus, $T\subset B(x^*,\varepsilon).$ Hence, $x^*$ is a semi $w^*$-SCS point of $B_{X^*}.$
\end{proof}

\begin{remark}

It is known from \cite{CL2} and \cite{GGS} that all points of $S_{X^*}$ are semi $w^*$-denting points if and only if the $w^*$-denting points of $B_{X^*}$ are dense in $S_{X^*}$. Similarly, Theorem $\ref{ch propii}$ and Theorem $\ref{propii}$ yield an analogous equivalence for semi $w^*$-PC and $w^*$-PC of $B_{X^*}$. However, for a similar connection between semi $w^*$-SCS points and $w^*$-SCS points, one implication is true from Proposition $\ref{scs oneside}$, while the converse remains unknown. More precisely, it is not known whether the fact that all points of $B_{X^*}$ are semi $w^*$-SCS points of $B_{X^*}$ implies that the set of $w^*$-SCS points of $B_{X^*}$ is dense in $B_{X^*}$.

\end{remark}

We present a summary of the interrelationships among the notions discussed in this section. Additionally, we provide counterexamples to show that none of the 
reverse implications hold. 



\begin{center}
semi denting \hspace{.3 cm} $\Longrightarrow$ \hspace{.2 cm} semi PC \hspace{.3 cm} $\Longrightarrow$ \hspace{.2 cm} semi SCS \\
 $  \Big \Uparrow \hspace{3 cm} \Big \Uparrow \hspace{3 cm} \Big \Uparrow$  \\
semi $w^*$-denting $\Longrightarrow$  semi $w^*$-PC $\Longrightarrow$  semi $w^*$-SCS
\end{center}
\begin{example}\label{example}
~

\begin{itemize}


\item Consider the space $l_1^n$. Then $w^*$-PC($B_{l_1^n}$) = PC($B_{l_1^n}$) = $B_{l_1^n}$ and by Remark $\ref{dent-ext}$, 
\begin{center}
semi denting($B_{l_1^n}$) = semi $w^*$-denting($B_{l_1^n}$) = $\{\pm e_i : 1\leqslant i \leqslant n\}.$
\end{center}
 Hence, any point in $B_{l_1^n}$ other than  $\pm e_i$ for $1\leqslant i \leqslant n$ is a semi PC (semi $w^*$-PC) of $B_{l_1^n}$, but not a semi denting point (semi $w^*$-denting point) of $B_{l_1^n}$.

\item Observe that in any Banach space $X$, if $B_X$ contains a semi SCS (resp. semi $w^*$-SCS) point, then $0$ is necessarily a semi SCS (resp. semi $w^*$-SCS) point of $B_X$. For instance, in $B_{l_1}$, the point 0 is both semi SCS and semi $w^*$-SCS point of $B_{l_1}$.  However, 0 can not be semi PC or semi $w^*$-PC of $B_{l_1}$. 

\item  Consider $X^*=C[0,1]^*.$ It is known that $X^*$ has the $w^*$-strong diameter two property, that is, every convex combination of $w^*$-slices of $B_{X^*}$ has diameter two (see \cite{BGLPRZ}, \cite{HLP1}).
Consequently, $B_{X^*}$ fails to contain any semi $w^*$-SCS point (and hence no semi $w^*$-PC or semi $w^*$-denting point). However, $B_{X^*}$ has denting points (for example, the characteristic function $\chi_{(0,1]}$), and hence it also contains semi denting point, semi PC and semi SCS point.

\end{itemize}

\end{example}


\section{$\mathcal{A}$-SCS point and its ball separation characterizaton}
\hypertarget{6.2}{ }

In \cite{CL1}, the concepts of $\mathcal{A}$-denting point and $\mathcal{A}$-Point of Continuity ($\mathcal{A}$-PC) of $B_{X^*}$ were introduced and their ball separation characterization was studied.
Motivated by the concepts of $\mathcal{A}$-denting point and $\mathcal{A}$-PC of $B_{X^*}$,
 we define,

\begin{definition}
Let $\mathcal{A}$ be a collection of bounded subsets of $X.$ Then $x^*\in X^*$ is said to be an \emph{$\mathcal{A}$-Small Combination of Slice point} ($\mathcal{A}$-SCS point)    of $B_{X^*}$ if for each $A\in \mathcal{A}$ and $\varepsilon>0$ there exists a convex combination of $w^*$-slices $T=\sum\limits_{i=1}^{n} \la_i S_i$ in $B_{X^*}$ 
 such that $x^* \in T$ and diam$_A(T) <\e$.
\end{definition} 

\begin{remark}
If $\mathcal{A}$, in particular, contains all bounded subsets of $X$, then $\mathcal{A}$-SCS point coincides with $w^*$-SCS point of $B_{X^*}$. 
  If $\mathcal{A}$ is taken to be the family of all compact subsets of $X$, then an  $\mathcal{A}$-denting point of $B_{X^*}$ is exactly an extreme point of $B_{X^*}$ \cite{WZ} and every point on $S_{X^*}$ is an $\mathcal{A}$-PC of $B_{X^*}$ \cite{BS2}. Consequently, every point of $B_{X^*}$ becomes an $\mathcal{A}$-SCS point of $B_{X^*}$. See also \cite{BG} and \cite{DC}, where related concepts were investigated for other choices of $\mathcal{A}$.
\end{remark}

\begin{definition}
\cite{CL1}
A collection $\mathcal{A}$ of bounded subsets of $X$ is said to be \emph{compatible}  if it satisfies the followings :
\begin{enumerate}
\item If $A\in \mathcal{A}$, then for each $C\subset A ,$ we have $C\in \mathcal{A}.$
\item If $A\in \mathcal{A}$ and $x\in X,$ then $A+x \in \mathcal{A}$ and $A\bigcup \{x\} \in \mathcal{A}.$
\item If $A\in \mathcal{A},$ then $\overline{|\mathrm{co}|}(A)\in \mathcal{A}.$ 
\end{enumerate}
\end{definition}
 


In our next result we discuss ball separation conditions for nonzero $\mathcal{A}$-SCS point of $B_{X^*}$.


\begin{theorem}\label{scs prop}
Let $X$ be a Banach space, $\mathcal{A}$ be a collection of bounded subsets of $X$
and $x^*_0$ be an $\mathcal{A}$-SCS point of $B_{X^*}\setminus\{0\}.$ Then for every $A\in \mathcal{A}$ with $\inf x^*_0(A)>0,$  there exist closed balls $B_1,B_2,\ldots,B_n$ in $X$ such that $A\subset \bigcup\limits_{i=1}^{n} B_i$ and $0\notin \bigcup\limits_{i=1}^{n} B_i.$
\end{theorem}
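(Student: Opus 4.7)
The plan is to adapt the argument used in Proposition \ref{ch w star bscsp}(i), replacing the norm-diameter of the convex combination of $w^*$-slices by the $A$-seminorm diameter and invoking the $\mathcal{A}$-scs property of $f_0$ in place of $w^*$-$BSCSP$.

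First I would fix $A\in\mathcal{A}$ with $\delta := \inf f_0(A) > 0$. Applying the definition of $\mathcal{A}$-scs point to this $A$ and to $\varepsilon := \delta/2$, I obtain a convex combination of $w^*$-slices $T = \sum_{i=1}^{n}\lambda_i S_i$ of $B_{X^*}$ with $f_0 \in T$ and $\text{diam}_A(T) < \delta/2$. The triangle inequality then gives $g(a) \geq f_0(a) - \|f_0-g\|_A > \delta - \delta/2 = \delta/2 > 0$ for every $a\in A$ and every $g\in T$, so $\inf g(A) > 0$ uniformly for $g\in T$.

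I would then partition $A$ as $A = \bigcup_{i=1}^{n} A_i$ where $A_i := \{x\in A : h(x) > 0 \text{ for all } h\in S_i\}$, noting that $A_i \subset A$ is bounded and hence lies in $\mathcal{A}$. The inclusion $A \subset \bigcup_i A_i$ is proved by contradiction: if some $a\in A$ fell outside every $A_i$, one could pick $h_i\in S_i$ with $h_i(a)\leq 0$ for each $i$, and then $\sum_i \lambda_i h_i \in T$ would satisfy $\bigl(\sum_i \lambda_i h_i\bigr)(a)\leq 0$, contradicting the uniform positivity just established.

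Finally, for each $i$ the set $B_1(A_i) = \{f\in B_{X^*} : f(x)\leq 0 \text{ for some } x\in A_i\}$ is disjoint from $S_i$ by construction. Since $S_i$ has the form $\{g\in B_{X^*} : g(x_i) > \beta_i\}$ for some $x_i\in X$, its complement in $B_{X^*}$ is a proper $w^*$-closed convex set, and therefore contains $\overline{\text{co}}^{w^*} B_1(A_i)$; in particular $\overline{\text{co}}^{w^*} B_1(A_i) \neq B_{X^*}$. The Corollary to Proposition \ref{CL result 1} then supplies a ball $B_i\subset X$ with $A_i \subset B_i$ and $0\notin B_i$, and taking unions gives $A \subset \bigcup_i B_i$ with $0\notin \bigcup_i B_i$. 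The main point demanding care is the disjointness-to-separation passage in this last step, i.e.\ upgrading $B_1(A_i)\cap S_i = \emptyset$ to $\overline{\text{co}}^{w^*}B_1(A_i)\neq B_{X^*}$; everything else is a routine transcription of the $w^*$-$BSCSP$ argument with diameter replaced by $\text{diam}_A$.
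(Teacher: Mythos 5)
Your proof is correct and follows essentially the same route as the paper's: apply the $\mathcal{A}$-scs definition with $\varepsilon=\delta/2$, get uniform positivity of every $g\in T$ on $A$, decompose $A=\bigcup_i A_i$ with $A_i=\{x\in A: h(x)>0\ \forall h\in S_i\}$ via the same convex-combination contradiction, and invoke the ball-separation criterion from Proposition \ref{CL result 1} on each $A_i$. You in fact make explicit a step the paper leaves implicit, namely that $B_1(A_i)\cap S_i=\emptyset$ forces $\overline{\mathrm{co}}^{w^*}B_1(A_i)\neq B_{X^*}$ because the complement of the $w^*$-slice $S_i$ in $B_{X^*}$ is a proper $w^*$-closed convex set.
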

\begin{proof}
Suppose $x^*_0$ is an $\mathcal{A}$-SCS point of $B_{X^*}\setminus\{0\},$ $A\in \mathcal{A}$ and $\inf x^*_0(A)=\delta >0.$ Then there exists convex combination of $w^*$-slices
$\sum\limits_{i=1}^{n} \lambda_i S_i$ of $B_{X^*}$ 
 such that $x^*_0\in \sum\limits_{i=1}^{n} \lambda_i S_i$ and diam$_{A} (\sum\limits_{i=1}^{n} \lambda_i S_i)<\frac{\delta}{2}.$ 
 Observe that for any $a\in A$ and $x^*\in \sum\limits_{i=1}^{n} \lambda_i S_i,$ we have
$$x^*(a)
= x^*_0(a)-( x^*_0 - x^* )(a)
\geqslant \delta - \|x^*_0-x^*\|_A 
\geqslant \delta - \frac{\delta}{2}
= \frac{\delta}{2} .$$
Thus $\inf x^* (A) \geqslant \frac{\delta}{2} >0$ for all $x^*\in \sum_{i=1}^{n} \lambda_i S_i.$ \\
We claim that   $A=\bigcup\limits_{i=1}^{n} A_i$ where $A_i= \{ x\in A : x^*(x)> 0 \ \forall x^*\in S_i\}.$\\
 If not, there exists $x\in A \setminus \bigcup\limits_{i=1}^{n}A_i.$ Then for each i, there exist $x_i^* \in S_i$ such that $x_i^*(x)\leqslant 0.$ Hence, $\sum\limits_{i=1}^{n} \lambda_i x_i^* (x)\leqslant 0,$ a contradiction, since $\inf x^* (A) >0$ for all $ x^* \in \sum\limits_{i=1}^{n} \lambda_i S_i.$
 
Consider $B_1(A_i)=\{x^*\in B_{X^*} : x^*(x)\leqslant 0 $ for some $x\in A_i \}.$ Then $B_1(A_i) \bigcap S_i =\emptyset.$  
 By Proposition $\ref{CLcoro result 1}$, for each i, there exist closed balls $B_i$  in $X$ such that 
$A_i \subset B_i,$   $  0\notin B_i.$  
Thus, $A \subset \bigcup\limits_{i=1}^{n} B_i$ and $ 0\notin \bigcup\limits_{i=1}^{n} B_i.$ 
\end{proof}

\begin{remark}
The converse of Proposition $\ref{scs prop}$ is not known in general. However, a complete ball separation characterization for $\mathcal{A}$-denting point or $\mathcal{A}$-PC is available only when $\mathcal{A}$ is a compatible collection \cite{CL1}. This naturally raises the question of whether the converse of Proposition $\ref{scs prop}$ holds when $\mathcal{A}$ is a compatible collection.
\end{remark}

Let $\mathcal{A}$ be a compatible collection of bounded subsets of $X$. We define $\tau_\mathcal{A}$ as the topology on $X^*$ generated by the seminorms $\{\|.\|_A : A\in \mathcal{A}\}.$ The collection $\{B_A(f, \varepsilon) : A\in \mathcal{A}, \varepsilon >0\}$ forms a local base for $\tau_\mathcal{A}$ at 0. 
\begin{definition}
\cite{CL3}, \cite{GK} A subset $A$ of a Banach space $X$ is said to be \emph{ball generated} if there is a family $\{T_i : i \in I\}$ such that each $T_i$ is a finite union of closed balls and $A=\bigcap\limits_{i\in I} T_i$. A Banach space $X$ is said to have the \emph{Ball Generated Property} (BGP) if every closed bounded convex
set in $X$ is ball generated.
\end{definition}

\begin{theorem}\label{improve}
Suppose $X$ is a Banach space and $\mathcal{A}$ is a compatible collection of bounded subsets of $X.$ If the linear span of $\mathcal{A}$-SCS points of $B_{X^*}$ is $\tau_{\mathcal{A}}$-dense in $X^*,$ then every closed bounded convex subset in $\mathcal{A}$ is ball generated.
\end{theorem}
\begin{proof}
Let $A$ be a closed bounded convex set in $\mathcal{A}$.
Choose $x_0\notin A.$ 
Without loss of generality we can assume that $x_0=0.$ Otherwise, we will consider $A-x_0$, which is in $\mathcal{A}$, since $\mathcal{A}$ is compatible. If $A-x_0$ is ball generated, then $A$ is also ball generated.

 Then by Hahn-Banach separation theorem there exists $x^*\in S_{X^*}$ such that $\inf x^*(A)>0.$ Let $\inf x^*(A)=\delta>0.$ Since the linear span of $\mathcal{A}$-SCS points of $B_{X^*}$ is $\tau_{\mathcal{A}}$-dense in $X^*,$ then there are scalars $\lambda_1,\ldots,\lambda_n$  and $\mathcal{A}$-SCS points $x^*_1,\ldots,x^*_n$ of $B_{X^*}$ such that 
\begin{equation}
\|x^*-\sum_{i=1}^{n} \lambda_i x^*_i\|_A<\frac{\delta}{2}.
\end{equation}
 Furthermore, since the set of  $\mathcal{A}$-SCS points of $B_{X^*}$ is symmetric, we may assume that $\lambda_i>0$ for all $i=1,\ldots,n.$ 
  Observe that, for any $a\in A,$
$$\sum_{i=1}^{n} \lambda_i x^*_i (a)
= x^*(a)+\Big( -x^* + \sum_{i=1}^{n} \lambda_i x^*_i \Big )(a)
\geqslant \delta - \|x^*-\sum_{i=1}^{n} \lambda_i x^*_i\|_A 
\geqslant  \frac{\delta}{2} .$$
Thus $\inf (\sum\limits_{i=1}^{n} \lambda_i x^*_i) (A) \geqslant \frac{\delta}{2}.$ Then  $\sum\limits_{i=1}^{n} \lambda_i x^*_i \neq 0$ and therefore, without loss of generality we can assume that $x_i^*\neq 0$ for all $i=1\ldots, n$.\\ 
We claim that $A=\bigcup\limits_{i=1}^{n} A_i$ where $A_i= \{ x\in A : x^*_i(x) \geqslant \frac{\delta}{2 n \lambda_i} \}.$\\
 If not, there exists $x\in A \setminus \bigcup\limits_{i=1}^{n} A_i.$ Then  $x^*_i(x)< \frac{\delta}{2 n \lambda_i}$ for all i. Hence, $\sum\limits_{i=1}^{n} \lambda_i x^*_i (x) < \frac{\delta}{2 },$ a contradiction, since $\inf \sum\limits_{i=1}^{n} \lambda_i x^*_i (A) \geqslant \frac{\delta}{2}.$
 
  For each i, $A_i\in \mathcal{A}$ with $\inf x^*_i(A_i)>0,$ then by Theorem $\ref{scs prop}$ there exist closed balls $B_{i,1},B_{i,2},\ldots,B_{i,m(i)}$ in $X$ such that 
  $$A_i\subset \bigcup_{j=1}^{m(i)} B_{i,j} \ \ \mathrm{and} \ \ 0\notin \bigcup_{j=1}^{m(i)} B_{i,j}.$$ 
  Thus, $A \subset \bigcup\limits_{i=1}^{n}\bigcup\limits_{j=1}^{m(i)} B_{i,j}$ and $x_0=0\notin \bigcup\limits_{i=1}^{n}\bigcup\limits_{j=1}^{m(i)} B_{i,j}.$ 
  
  Therefore, for each $x\notin A$, there exists finite union of closed balls $T_x$ in $X$ such that $A\subset T_x$ and $x\notin T_x.$ 
  Hence, $A=\bigcap\limits_{x\notin A} T_x$.
\end{proof}

\begin{remark}
In particular, considering
 $\mathcal{A}$ as the set of all bounded subsets of $X$, we get from Theorem $\ref{improve}$ 
that $X$ has the BGP
if $X^*$ is the closed linear span of $w^*$-SCS points of $B_{X^*}$, which was proved in \cite[Proposition 2.5]{BB}. Thus Theorem $\ref{improve}$ provides a generalization of \cite[Proposition 2.5]{BB}.
\end{remark}

\begin{Acknowledgement} 
	The first author is grateful to Professor Ethan Duckworth, Chair, Department of mathematics and statistics  for his support and encouragement. She  is also grateful to Professor Bahram Roughani, Associate Dean College of Arts and Sciences, Loyola University for providing her with Dean's supplemental grant during her travel in Summer 2024. The research of the second author is financially supported by the Institute Postdoctoral Fellowship, NISER Bhubaneswar.
\end{Acknowledgement}

\end{document}